\numberwithin{equation}{section}
\theoremstyle{plain}
\newtheorem{thm}[subsection]{Theorem}
\newtheorem{prop}[subsection]{Proposition}
\newtheorem{lemma}[subsection]{Lemma}
\newtheorem{claim}[subsection]{Claim}
\theoremstyle{definition}
\newtheorem{defn}[subsubsection]{Definition}
\theoremstyle{remark}
\newtheorem{rem}[subsection]{Remark}
\newtheorem{rems}[subsection]{Remarks}
\numberwithin{equation}{section}
\DeclareSymbolFont{cyrletters}{OT2}{wncyr}{m}{n}
\DeclareMathSymbol{\sha}{\mathalpha}{cyrletters}{"58}
\newcommand{\CC}{\mathcal{C}}
\newcommand{\DD}{\mathcal{D}}
\newcommand{\RR}{\mathcal{R}}
\newcommand{\XX}{\mathcal{X}}
\newcommand{\WW}{{\mathcal{W}}}
\newcommand{\HH}{{\mathcal{H}}}
\renewcommand{\O}{\mathcal{O}}
\newcommand{\EE}{\mathcal{E}}
\newcommand{\LL}{\mathcal{L}}
\newcommand{\MM}{\mathcal{M}}
\newcommand{\NN}{\mathcal{N}}
\newcommand{\OO}{\mathcal{O}}
\newcommand{\PP}{\mathcal{P}}
\newcommand{\Fp}{{\mathbb{F}_p}}
\newcommand{\Z}{\mathbb{Z}}
\newcommand{\Q}{\mathbb{Q}}
\newcommand{\R}{\mathbb{R}}
\newcommand{\C}{\mathbb{C}}
\newcommand{\A}{\mathbb{A}}
\renewcommand{\P}{\mathbb{P}}
\newcommand{\into}{\hookrightarrow}
\newcommand{\tensor}{\otimes}
\newcommand{\nodiv}{\not|}
\def\nodiv{\mathrel{\mathchoice{\not|}{\not|}{\kern-.2em\not\kern.2em|}
{\kern-.2em\not\kern.2em|}}}
\newcommand{\G}{\mathbb{G}}
\DeclareMathOperator{\ord}{ord}
\DeclareMathOperator{\dvsr}{div}
\DeclareMathOperator{\aut}{Aut}
\DeclareMathOperator{\Pic}{Pic}
\DeclareMathOperator{\NS}{NS}
\DeclareMathOperator{\spec}{Spec}
\DeclareMathOperator{\proj}{Proj}
\def\clap#1{\hbox to 0pt{\hss#1\hss}}
\begin{document}
\title[Transversality of sections on elliptic surfaces]{Transversality
  of sections on elliptic surfaces\\ 
with applications to elliptic divisibility sequences\\
and geography of surfaces}

\author{Douglas Ulmer}
\address{Department of Mathematics \\ University of Arizona
  \\ Tucson, AZ~~85721 USA}
\email{ulmer@math.arizona.edu}

\author{Giancarlo Urz\'ua}
\address{Facultad de Matem\'aticas \\ Pontificia Universidad
  Cat\'olica de Chile \\ Santiago, Chile}
\email{urzua@mat.uc.cl}

\date{\today}

\subjclass[2010]{Primary 14J27; Secondary 11B39, 14J29}

\begin{abstract}
  We consider elliptic surfaces $\mathcal{E}$ over a field $k$
  equipped with zero section $O$ and another section $P$ of infinite
  order.  If $k$ has characteristic zero, we show there are only
  finitely many points where $O$ is tangent to a multiple of $P$.
  Equivalently, there is a finite list of integers such that if $n$ is
  not divisible by any of them, then $nP$ is not tangent to $O$.  Such
  tangencies can be interpreted as unlikely intersections.  If $k$ has
  characteristic zero or $p>3$ and $\mathcal{E}$ is very general, then
  we show there are no tangencies between $O$ and $nP$.  We apply
  these results to square-freeness of elliptic divisibility sequences
  and to geography of surfaces.  In particular, we construct mildly
  singular surfaces of arbitrary fixed geometric genus with $K$ ample
  and $K^2$ unbounded.
\end{abstract}

\maketitle

\section{Introduction}
Our aim in this paper is to study transversality properties of
sections of elliptic surfaces and to deduce consequences for elliptic
divisibility sequences and geography of surfaces.

To state the first result, let $k$ be a field of characteristic zero
and let $\CC$ be a smooth, projective, geometrically irreducible curve
over $k$.  Let $\pi:\EE\to\CC$ be a relatively minimal Jacobian
elliptic surface over $k$ (i.e., a smooth elliptic surface with a
section $O$ which will play the role of zero section), and let $P$ be
another section.  We write $nP$ for the section induced by
multiplication by $n$ in the group law of the fibers of $\EE\to\CC$.
Assume that $P$ has infinite order, i.e., $nP\neq O$ for all $n\neq0$.
As we will see below, except in degenerate situations the intersection
number $(nP).O$ grows like a constant times $n^2$.  Our first result
says that the intersections are usually transverse.

\begin{thm}\label{thm:one-surface'}
The set
\[T=\bigcup_{n\neq0}\left\{t\in\CC\left|\text{ $nP$ is tangent to $O$
      over $t$}\right.\right\}\]
is finite.
\end{thm}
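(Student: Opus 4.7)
The plan is to reduce the finiteness of $T$ to the nonvanishing of an algebraic invariant $\mu(P)$, a section of a line bundle on $\CC$ whose zero set contains $T$. The invariant $\mu$ is constructed from the Gauss--Manin connection on $H^1_{\mathrm{dR}}(\EE/\CC)$: over the smooth locus $\CC^\circ$, this connection determines a canonical horizontal lift of tangent vectors on $\CC^\circ$ to tangent vectors on $\EE$ at each point of $\pi^{-1}(\CC^\circ)$. For any section $S : \CC \to \EE$, the difference between $dS$ and this horizontal lift lies in the vertical (fiberwise) tangent bundle; translated to the zero section via the group law, it defines a section $\mu(S) \in H^0(\CC^\circ, \omega_{\EE/\CC}^\vee \otimes \Omega^1_\CC)$.

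Three properties of $\mu$ drive the argument. First, $\mu$ is a group homomorphism, so $\mu(nP) = n \mu(P)$; this uses that the Gauss--Manin horizontal lift and the group law on $\EE$ are compatible (equivalently, that the invariant differential is translation-invariant and $[n]^*\omega = n\omega$). Second, $\mu$ vanishes on torsion sections, including the zero section: torsion sections are Gauss--Manin flat, since $[n]$ commutes with the connection and $n\sigma = O$ for a torsion section of order $n$. Third, two sections agreeing at a point $t_0$ are tangent there if and only if their $\mu$-values agree at $t_0$, because $dS(t_0)$ splits canonically as the horizontal lift at $S(t_0)$ plus $\mu(S)(t_0)$. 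Combining these: for $n \neq 0$ with $nP(t_0) = O(t_0)$, tangency of $nP$ and $O$ at $t_0$ is equivalent to $\mu(nP)(t_0) = 0$, hence by linearity and characteristic zero to $\mu(P)(t_0) = 0$. Therefore $T$ is contained in the zero set of $\mu(P)$ on $\CC^\circ$.

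The crux is to show $\mu(P) \not\equiv 0$. For non-isotrivial $\EE \to \CC$, this is Manin's theorem of the kernel: $\mu(P) \equiv 0$ would force $P$ to be torsion, contradicting the hypothesis. For isotrivial $\EE$, after a finite base change we may assume $\EE \cong \CC' \times E$ for an elliptic curve $E$, and the infinite-order section $P$ is either a constant section of $E$ (in which case $nP$ never meets $O$ and $T = \emptyset$) or a non-constant morphism $\CC' \to E$, whose differential vanishes on a finite set that contains the tangency locus. In either case $\mu(P)$ is a nonzero section of a line bundle on the smooth curve $\CC^\circ$, so its zero set is finite; together with the finitely many points of $\CC \setminus \CC^\circ$, this yields finiteness of $T$.

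The main obstacle is the rigorous setup: constructing $\mu$ as a global algebraic object with the functoriality properties above, verifying the tangency criterion and the additivity cleanly, and invoking Manin's theorem of the kernel in the non-isotrivial case. Care is also required at the singular fibers of $\pi$, where the Gauss--Manin connection acquires singularities and the horizontal lift must be extended or excluded from the domain of $\mu$.
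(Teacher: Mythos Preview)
Your reduction is the same as the paper's: tangency of $nP$ to $O$ at $t_0$ is equivalent to $P$ being tangent at $t_0$ to the local Betti leaf through $P(t_0)$, and the problem becomes showing that this tangency locus is finite. The gap is the claim that the resulting invariant $\mu(P)$ is a holomorphic (algebraic) section of a line bundle on $\CC^\circ$.

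The Gauss--Manin connection lives on the rank-two bundle $H^1_{\mathrm{dR}}(\EE/\CC)$, and it does \emph{not} descend to a holomorphic connection on the fibration $\EE\to\CC^\circ$: the Hodge line $\omega\subset H^1_{\mathrm{dR}}$ is not $\nabla$-stable (Griffiths transversality), so there is no canonical holomorphic splitting of $0\to T_{\EE/\CC}\to T_\EE\to\pi^*T_\CC\to 0$. What the flat integral lattice in $H^1_{\mathrm{dR}}$ produces is the real-analytic Betti foliation. In local coordinates with period $\tau(z)$, section lift $w(z)$, and Betti coordinate $r=\mathrm{Im}(w)/\mathrm{Im}(\tau)$, your $\mu(P)$ is $w'-r\,\tau'$; when $\tau'\not\equiv0$ (the non-isotrivial case) this is only real-analytic. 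The step ``nonzero section of a line bundle on a curve, hence finitely many zeros'' therefore fails: the zero set of a real-analytic function on a real surface can be one-dimensional. Your appeal to Manin's theorem of the kernel conflates two objects: the algebraic Manin map is built from the second-order Picard--Fuchs operator (which kills periods and is therefore well defined holomorphically), but its vanishing at a point does not encode first-order tangency to $O$; the first-order quantity that does encode tangency is the non-holomorphic $w'-r\,\tau'$ above.

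This is exactly the difficulty the paper addresses. Having made the same reduction to the Betti tangency locus, it proves discreteness by real-analytic methods: the \L ojasiewicz gradient inequality for the Betti-coordinate map $\Delta\to\R^2$ at good fibers, together with an explicit local analysis via Kodaira's uniformization near each type of bad fiber. Your isotrivial case is fine (there $\tau'\equiv0$ and $\mu(P)=w'$ really is holomorphic), but in the non-isotrivial case the finiteness is genuinely a real-analytic statement and needs an argument of that kind.
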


Here and in the rest of the paper, we conflate the sections
$O:\CC\to\EE$ and $P:\CC\to\EE$ with their images $O(\CC)\subset\EE$
and $P(\CC)\subset\EE$.  Thus we say ``$P$ is tangent to $O$'' rather
than ``the image of $P$ is tangent to the image of $O$.''

\begin{rem}
  We note that a tangency between $nP$ and $O$ can be regarded as an
  ``unlikely intersection'' as follows: Let $T_\EE$ be the tangent
  bundle of $\EE$ and let $\P T_\EE$ be the associated projective
  bundle.  Thus $\P T_\EE\to\EE$ is a $\P^1$-bundle, and the total
  space $\P T_\EE$ is a smooth, projective threefold.  If
  $C\subset\EE$ is a smooth curve, then there is a canonical lift of
  $C$ to $\tilde C\subset\P T_\EE$ defined by sending a point $t\in C$
  to the class of its tangent line $T_{C,t}\subset T_{\EE,t}$ in
  $\P T_\EE$.  Two curves $C_1$ and $C_2$ in $\EE$ that meet at
  $y\in\EE$ are tangent there if and only if their lifts meet at a
  point of $\P T_\EE$ over $y$.  Thus a tangency between $C_1$ and
  $C_2$ is equivalent to the ``unlikely'' intersection of the two
  curves $\tilde C_1$ and $\tilde C_2$ in the threefold $\P T_\EE$.
  We refer to \cite{ZannierUnlikely12} for a comprehensive account of
  work on unlikely intersections up to 2012.
\end{rem}

\begin{rems}\mbox{}
  \begin{enumerate}
  \item A result very similar to our Theorem~\ref{thm:one-surface'}
    was communicated to us by Corvaja, Demeio, Masser, and Zannier
    after we posted the first version of this paper.  Their methods
    are rather different, see \cite{CorvajaDemeioMasserZannierpp}.
    They show more generally that finiteness holds when the cyclic
    group $\{nP|n\in\Z\}$ is replaced by a finitely generated,
    torsion-free group of sections.
  \item On the other hand, our methods lead to non-trivial results in
    families, and in particular we show that for ``generic'' data, the
    set $T$ above is empty.  (See Theorems~\ref{thm:vg} and
    \ref{thm:explicit} below.)  This is crucial for our application to
    geography of surfaces.
  \item In the first version of this paper, we used a
    trivialization essentially equivalent to the Betti foliation
    discussed in Section~3 of this version. Later, we learned of the
    ``Betti'' terminology used by several authors, including in
    \cite{CorvajaMasserZannier18}, and adopted it in this paper.
  \end{enumerate}
\end{rems}

We next reformulate Theorem~\ref{thm:one-surface'} in analogy
with the ``elliptic divisibility sequence'' associated to an elliptic
curve and a point.  (See \cite[Exers. 3.34-36, 9.4,
9.12]{SilvermanAEC} for definitions and examples, and
\cite{Ingram-etal12} for more on the function field case.)  Define a
sequence of effective divisors on $\CC$ for $n\ge1$ by
\[ D_n:=O^*(nP),\] 
i.e., $D_n$ is the pull-back along the zero section of the divisor
$nP$ on $\EE$.  (We will give several other equivalent definitions in
Section~\ref{s:prelims}.) 

The sequence $D_n$ is a natural analogue of an elliptic divisibility
sequence.  In particular, we will see below that if $m$ divides $n$,
then $D_m$ divides $D_n$ (i.e., $D_n-D_m$ is effective), and that
M\"obius inversion gives a sequence of effective divisors $D'_m$ such
that
\[D_n=\sum_{m|n}D'_m.\]

We say that a divisor on $\CC$ is \emph{reduced} if it has the form
\[ D=\sum_it_i\] 
where the $t_i$ are distinct closed points of $\CC$ (i.e., each
non-zero coefficient of $D$ equals $1$).  This is an analogue of an
integer being square-free.

\begin{thm}\label{thm:one-surface}
Given $\EE$ and $P$ as above, there is a finite set of integers
$M=\{m_1,\dots,m_k\}$ such that
\begin{enumerate}
\item $O$ and $nP$ intersect transversally if and only if $n$ is not
  divisible by any  element of $M$. 
\item $D_n$ is reduced if and only if $n$ is not divisible by any
  element of $M$.
\item $D'_m$ is reduced if and only if $m\not\in M$.
\end{enumerate}
\end{thm}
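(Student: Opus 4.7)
The plan is to derive all three parts from Theorem~\ref{thm:one-surface'} together with a single local formal-group computation. For each $t\in\CC$ at which the fiber $\EE_t$ is smooth, let $m_t\in\Z_{>0}\cup\{\infty\}$ denote the order of $P(t)$ in $\EE_t$. A tangency of $nP$ and $O$ at $t$ forces $nP(t)=O(t)$, so every $t\in T$ has $m_t<\infty$; combined with Theorem~\ref{thm:one-surface'}, this makes
\[M:=\{m_t\mid t\in T\}\]
a finite set of positive integers. The plan is to show that this $M$ is the set sought by the theorem.

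The core of the argument is a local claim that I would establish first: for every $t\in\CC$ and every nonzero $n\in\Z$,
\[(O\cdot nP)_t=\begin{cases}(O\cdot m_tP)_t&\text{if }m_t\mid n,\\ 0&\text{otherwise.}\end{cases}\]
The second case is immediate. For the first, write $n=jm_t$ and use the group-scheme structure on the smooth locus of $\pi$ to factor $nP=[j]\circ(m_tP)$. In a formal neighborhood of $O$ along the fiber over $t$, the fibration becomes a one-parameter formal group, and multiplication by the nonzero integer $j$ acts as a power series in a local parameter $z$ with leading term $jz$. In characteristic zero this is a formal automorphism, hence preserves the order of vanishing of any section through $O(t)$. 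I expect this formal-group step to be the main obstacle; the remaining steps are bookkeeping.

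Granted the claim, the three parts follow quickly. For (1), $nP$ fails to be transverse to $O$ at $t$ exactly when $m_t\mid n$ and $(O\cdot m_tP)_t\geq 2$; the latter condition says that $m_tP$ itself is tangent to $O$ at $t$, which, via the claim applied to any $n_0P$ witnessing $t\in T$, is equivalent to $t\in T$. Thus a global failure of transversality occurs iff some element of $M$ divides $n$. Part (2) is immediate, since $D_n$ is reduced iff no intersection multiplicity of $nP$ with $O$ exceeds $1$. For (3), a short M\"obius inversion using the claim yields
\[D'_m=\sum_{\{t\in\CC:\,m_t=m\}}(O\cdot mP)_t\cdot t,\]
so $D'_m$ fails to be reduced iff some $t$ with $m_t=m$ has $(O\cdot mP)_t\geq 2$, iff $m=m_t$ for some $t\in T$, iff $m\in M$.
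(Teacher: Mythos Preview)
Your proposal is correct and rests on the same key ingredient as the paper's proof: multiplication by a nonzero integer is \'etale on the smooth locus of $\pi$ (the paper records this in Section~\ref{ss:mult-by-n}; you recover it via the formal group), which gives both your local claim $(O\cdot nP)_t=(O\cdot m_tP)_t$ and the paper's equality~\eqref{eq:Dn-alt}. The organization differs slightly: the paper reformulates tangencies of $nP$ with $O$ as tangencies of $P$ with the torsion multisections $\EE[m]'$, then uses an incidence correspondence in $\CC\times\Z_{>0}$ to transfer finiteness of $T$ to finiteness of $M$, whereas you write down $M=\{m_t:t\in T\}$ directly and verify (1)--(3) from the local claim; the resulting sets $M$ coincide. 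One small point to tidy up: you define $m_t$ only when the fiber $\EE_t$ is smooth but then invoke the local claim for every $t\in\CC$; since sections always land in $\EE^{sm}$, which is a group scheme over all of $\CC$, both the definition of $m_t$ and the formal-group computation extend to singular fibers without change.
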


\begin{rem}
  Thereom~\ref{thm:one-surface} is much stronger than what one might
  predict from standard conjectures.  For simplicity, assume that
  $\CC=\P^1$, let $F=\C(\CC)$, and let $E/F$ be the generic fiber of
  $\EE\to\CC$.  Choosing a coordinate $t$ on $\P^1$ so that none of
  the $D_n$ involve the place at infinity, we may identify each $D_n$
  with a monic polynomial $f_n$ in $t$, and to say that $D_n$ is
  reduced is to say that $\gcd(f_n,df_n/dt)=1$.  Arguments similar to
  those in \cite{Silverman05} applied to a certain Buium jet space of $E/F$
  together with a function field analogue of Vojta's conjecture
  suggest that 
  \[\deg\gcd(f_n,df_n/dt)\overset{?}{=}o(n^2)=o(h(nP))\]
  where $h(nP)$ is the canonical height of $nP$.  (See
  Section~\ref{ss:heights} for definitions.)  But
  Theorem~\ref{thm:one-surface} shows that
  $\deg\gcd(f_n,df_n/dt)$ is in fact bounded!
\end{rem}

\begin{rem}
  We have no reason to believe that the analogues of
  Theorems~\ref{thm:one-surface'} and \ref{thm:one-surface} (with $n$
  restricted to be prime to the characteristic) are false in positive
  characteristic.  However, our proof uses analytic techniques and
  does not obviously carry over to the arithmetic situation.
\end{rem}

The next two results hold for $k$ a field of characteristic zero or
sufficiently large $p$.  As before, $\CC$ is a smooth, projective,
geometrically irreducible curve over $k$.  The next result says
roughly that if $\EE\to\CC$ is a very general Jacobian elliptic
surface with an additional section $P$, there are no tangencies
between $nP$ and $O$ for $n\neq0$.  Recall that a line bundle $L$ on
$\CC$ is said to be globally generated (or base point free) if for
every $t\in\CC$, there is a global section of $L$ which does not
vanish at $t$.

\begin{thm}\label{thm:vg}
  Suppose $k$ is a field of characteristic zero or of characteristic
  $p>3$, and let $\CC$ be as above.  Let $L$ be a globally generated
  line bundle on $\CC$ of degree $d$ and set
\[V=H^0(L^2\oplus L^3\oplus L^4).\]  
Then for a very general $a=(a_2,a_3,a_4)\in V$, the elliptic surface
$\EE\to\CC$ associated to
\[E:\qquad y^2+a_3y=x^3+a_2x^2+a_4x\]
equipped with the usual zero section $O$ and the section $P=(0,0)$ has
the following properties: 
\begin{enumerate}
\item $P$ has infinite order.
\item The singular fibers of $\EE\to\CC$ are nodal cubics
  \textup{(}i.e., Kodaira type $I_1$\textup{)}.
\item $P$ meets each singular fiber in a non-torsion point.
\item If $n$ is not a multiple of the characteristic of $k$, then $nP$
  meets $O$ transversally in $d(n^2-1)$ points. 
\end{enumerate}
\end{thm}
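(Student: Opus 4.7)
The plan is to verify each of (1)--(4) holds for $a$ outside a countable union of proper Zariski-closed subsets of $V$; the resulting intersection is the very general set asserted by the theorem.

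Property (2) is a nonempty Zariski open condition on $V$: the discriminant $\Delta(a) \in H^0(\CC, L^{12})$ depends polynomially on $a$, and the locus where $\Delta(a)$ has only simple zeros is Zariski open, nonempty by a Bertini-type argument that uses global generation of $L$ and $d \ge 1$ (the case $d = 0$ reduces to a product situation). Restricting to this open set $U \subset V$, properties (1) and (3) hold outside countable unions of proper closed subsets of $U$: for each $n \ge 1$, the loci $\{a \in U : nP_a = O\}$ and $\{a \in U : P_a \text{ is } n\text{-torsion in some singular fiber}\}$ are Zariski closed, and each is a proper subset as verified by a non-torsion specialization (for instance by choosing $\mathbf{a}(t_0)$ at a singular fiber so that $(0,0)$ has specified non-torsion trace on the smooth locus $\G_m$ of the nodal cubic).

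The heart of the argument is transversality in (4). Fix $n \ge 1$ not divisible by the characteristic, and form the incidence variety
\[
I_n = \{(a, t) \in U \times \CC : nP_a \text{ is tangent to } O \text{ at } t\},
\]
which is Zariski closed in $U \times \CC$. Its projection $T_n := \text{pr}_V(I_n) \subseteq V$ is closed since $\CC$ is proper, and we show it is a \emph{proper} subset of $V$. Fix $t_0 \in \CC$ and analyze the fiber $I_{n, t_0} \subset U$. Using the formal parameter $u = -x/y$ at $O$, the conditions for tangency at $t_0$ are $u(nP_a)(t_0) = 0$ (the meeting condition) and $\frac{d}{dt} u(nP_a)|_{t_0} = 0$ (the genuine tangency condition). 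The meeting condition depends only on the values $\mathbf{a}(t_0) = (a_2(t_0), a_3(t_0), a_4(t_0))$, giving one nontrivial polynomial equation on the 3-dimensional fiber of the restriction $V \twoheadrightarrow L^2_{t_0} \oplus L^3_{t_0} \oplus L^4_{t_0}$ (surjective by global generation of $L$), hence codimension $1$ in $V$. The tangency condition involves additionally the 1-jets $\mathbf{a}'(t_0)$, and a local formal-group-law calculation shows it is genuinely independent of the meeting equation. Together the two conditions cut out codimension $\ge 2$ in $V$, so $\dim I_n \le \dim V - 1$, and $T_n \subsetneq V$.

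The intersection count $d(n^2 - 1)$ in (4) follows from Shioda's height formula. On $\EE_a$ with only $I_1$ fibers, the Weierstrass model is already the smooth minimal model, so $P_a = (0,0)$ and $O = \infty$ are disjoint sections and $(P_a . O) = 0$. With $\chi(\OO_{\EE_a}) = d$, $(P_a . P_a) = -d$, and vanishing local correction terms (all fibers $I_1$),
\[
\hat{h}(P_a) = \chi + 2(P_a . O) - (P_a . P_a) = d + 0 + d = 2d.
\]
Hence $\hat{h}(nP_a) = 2dn^2 = 2d + 2(nP_a . O)$, which yields $(nP_a . O) = d(n^2 - 1)$. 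Combined with transversality, $nP_a$ meets $O$ at exactly $d(n^2 - 1)$ distinct points.

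The main obstacle will be the codimension-$2$ claim for $I_{n, t_0}$: verifying that the tangency condition is genuinely independent of the meeting condition. This reduces to an explicit local calculation using the formal group law of the generic elliptic fiber near the $n$-torsion point $(0,0)$, exhibiting a first-order deformation $\delta a$ in $V$ which preserves $nP_a(t_0) = O(t_0)$ but perturbs the tangent direction of $nP_a$ at $t_0$; the hypothesis that $L$ be globally generated supplies the freedom needed to realize such a deformation within $V$.
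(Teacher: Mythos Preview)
Your strategy matches the paper's: verify each of (1)--(4) on a countable intersection of nonempty Zariski opens, with the height computation for the intersection count in (4). The substantive work is transversality, and here your incidence-variety approach is essentially the paper's, repackaged. The paper works via the moduli space $\MM=\Spec k[a_2,a_3,a_4]$ of Section~\ref{s:moduli} and the morphism $F\colon V\times U\to\MM$ (for $U\subset\CC$ a trivializing open); global generation makes $F$ a submersion, so $D_n:=F^{-1}(\MM[n])$ is smooth of codimension~1, and your $I_n$ is exactly the ramification locus of the projection $D_n\to V_\Delta$. The paper shows this projection is generically \'etale by exhibiting a single unramified point, which is dual to your codimension-2 fiber claim. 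The moduli framework also replaces your proposed ``formal-group-law calculation'' by the single observation that $\MM[n]$ is a smooth divisor in $\MM$.

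There is a gap, however, in your assertion that ``the hypothesis that $L$ be globally generated supplies the freedom needed'' for the tangency-breaking deformation $\delta a$. Global generation only gives surjectivity onto the 0-jet fiber $L^2_{t_0}\oplus L^3_{t_0}\oplus L^4_{t_0}$; to perturb the tangent direction of $nP_a$ at $t_0$ while fixing $nP_a(t_0)$ you need control over the 1-jets of the $a_i$, i.e., sections of $L^i$ vanishing to order exactly~1 at $t_0$. Such sections exist only for $t_0$ outside a finite bad set (the paper notes this explicitly). The paper handles this by first observing that the fibers of $F$ are irreducible (affine-space bundles over $U$), which lets one move a given point $(a,t)\in D_n$ along its $F$-fiber to a point $(a',t')$ with $t'$ generic, and only then adjust the 1-jet there. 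Your argument can be repaired the same way, or more simply by noting that over the finitely many bad $t_0$ the meeting condition alone already cuts codimension~1 in $V$, so those fibers still contribute at most $\dim V-1$ to $\dim I_n$, and $T_n\subsetneq V$ follows as you claim.
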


(Here and elsewhere in the paper, $L^i$ means $L^{\tensor i}$ and not
$L^{\oplus i}$.)  We will explain the construction of the elliptic
surface attached to $a$ in Section~\ref{ss:L-to-EE} and the
meaning of ``very general'' in Section~\ref{s:v-g-surfaces}.

As with many results about ``very general'' points,
Theorem~\ref{thm:vg} does not allow one to deduce the existence of
examples over ``small'' (countable) fields such as number fields or global
function fields.   However, after relaxing condition (2) above, we can
write down such examples explicitly, at least when $L$ is the square
of a globally generated line bundle.

\begin{thm}\label{thm:explicit}
  Let $k$ be a field of characteristic 0 or a field of characteristic
  $p>2$ which is not algebraic over the prime field $\Fp$.  Let $\CC$
  be a smooth, projective, geometrically irreducible curve over $k$
  with a non-trivial line bundle $L$ which is the square of a globally
  generated line bundle $F$.  Then there exist infinitely many pairs
  $(\EE,P)$ where $\EE$ is a Jacobian elliptic surface $\EE\to\CC$
  equipped with a section $P$ such that:
\begin{enumerate}
\item $P$ has infinite order.
\item The singular fibers of $\EE\to\CC$ are of Kodaira type $I_0^*$.
\item $P$ meets each singular fiber in a non-torsion point.
\item If $n$ is not a multiple of the characteristic of $k$, then $nP$
  meets $O$ transversally in 
 \[\begin{cases}
\displaystyle\frac{n^2-1}2d&\text{if $n$ is odd,}\\
\displaystyle\frac{n^2-4}2d&\text{if $n$ is even}
\end{cases}\]
points, where $\deg L=2d$.
\item $O^*(\Omega^1_{\EE/\CC})\cong L$.
\end{enumerate}
\end{thm}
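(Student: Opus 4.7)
The plan is to construct $\EE$ as an explicit quadratic twist of a constant elliptic curve over $k$ by a section of $L^2$. First, choose an elliptic curve $E_0/k$ with all three $2$-torsion points defined over $k$, written $E_0: y^2 = \prod_{i=1}^3 (x - e_i) = x^3 + Ax + B$ with distinct $e_i \in k$ and $\sum e_i = 0$; such $E_0$ exist in abundance since $k$ is not algebraic over $\F_p$. Since $F$ is globally generated and non-trivial, $h^0(F) \geq 2$, so one may choose general $s_0, s_1 \in H^0(F)$ with no common zero, giving a morphism $[s_0:s_1] : \CC \to \P^1$ of degree $d = \deg F$. Set
\[h \;=\; s_1 \prod_{i=1}^3 (s_0 - e_i s_1) \;\in\; H^0(F^4) \;=\; H^0(L^2),\]
whose $4d$ zeros are generically simple. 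Define $\EE$ by $y^2 = x^3 + Ah^2 x + Bh^3$, the quadratic twist of $E_0 \times \CC$ by $h$. By construction the fundamental line bundle of $\EE$ is $L$ (giving (5)), and at each simple zero of $h$ the discriminant vanishes to order $6$ with integral $j$-invariant, so Tate's algorithm produces Kodaira type $I_0^*$ (giving (2)).

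The candidate section of infinite order is $P = (X, Y')$ with $X = s_0 \prod_i(s_0 - e_i s_1)$ and $Y' = (\prod_i (s_0 - e_i s_1))^2$. That $P$ lies on $\EE$ reduces to the algebraic identity $s_0^3 + As_0 s_1^2 + Bs_1^3 = \prod_i (s_0 - e_i s_1)$, obtained from $\prod(x - e_i) = x^3 + Ax + B$ by substituting $x = s_0/s_1$ and clearing denominators. Over the double cover $\CC' \to \CC$ defined by $w^2 = h$, the section $P$ corresponds to the morphism $\CC' \to E_0$ sending $c \mapsto (s_0(c)/s_1(c),\, w(c)/s_1(c)^2)$, which is non-constant for generic $s_0, s_1$; any torsion section would force this morphism to factor through a finite subscheme of $E_0$, an impossibility, so $P$ is non-torsion (giving (1)).

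For (3) and (4), apply Shioda's height formula $\hat h(P) = 2\chi + 2(P \cdot O) - \sum_v c_v(P)$ with $\chi = \deg L = 2d$. Since $X, Y'$ are regular sections, $P \cdot O = 0$. At the $d$ zeros of $s_1$ the section $P$ lies on the identity component of $I_0^*$ (contributing $c_v = 0$), while at each of the $3d$ zeros of $s_0 - e_i s_1$ it lies on the non-identity component of $(\Z/2)^2$ corresponding to the $2$-torsion $(e_i, 0) \in E_0$ (contributing $c_v = 1$); thus $\sum_v c_v(P) = 3d$ and $\hat h(P) = d$. For odd $n$ the component pattern of $nP$ matches that of $P$, while for even $n$ it is trivial at every fiber, so substituting into the height formula for $nP$ yields exactly the intersection counts in (4). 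Condition (3) — that $P$ meets each singular fiber at a non-torsion point of the N\'eron model — is verified by computing the formal expansions of $X, Y'$ at each zero of $h$ and tracing through the blowups of Tate's algorithm to show that the $\G_a$-coordinate of $P$ on the smooth part of each $I_0^*$ fiber is non-zero for general $s_0, s_1$; this is the main technical step. Transversality of the intersections of $nP$ with $O$ follows from the explicit form of these intersections combined with the tangency-finiteness of Theorem~\ref{thm:one-surface'}.

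For the infinitude of pairs, vary $(e_1, e_2) \in k^2$ over the open locus where $e_1, e_2, -e_1-e_2$ are non-zero and pairwise distinct: this gives infinitely many non-isomorphic $E_0$ and hence infinitely many pairs $(\EE, P)$, possible because $k$ is not algebraic over $\F_p$. The main obstacle is verifying condition (3) through the blowup analysis of $I_0^*$, which requires tracking the N\'eron-model coordinates of $P$ through Tate's algorithm and confirming genericity of the $\G_a$-component at each singular fiber.
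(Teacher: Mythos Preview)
Your construction is in fact the paper's construction in disguise: the twisting section $h=s_1\prod_i(s_0-e_is_1)\in H^0(L^2)$ is precisely the pullback under $[s_0:s_1]:\CC\to\P^1$ of the section defining the Kummer surface $(E_0\times E_0)/(\pm1)\to\P^1$, and your $P$ is the pullback of the diagonal section. Your height computation recovers the correct intersection numbers $(nP)\cdot O$, matching the paper's Remark~\ref{rem:height}.

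The genuine gap is transversality in (4). Your sentence ``transversality \dots\ follows from the explicit form of these intersections combined with the tangency-finiteness of Theorem~\ref{thm:one-surface'}'' is not an argument: Theorem~\ref{thm:one-surface'} gives only \emph{finitely many} tangencies, not zero, and it is stated only in characteristic~$0$. Knowing the intersection number $(nP)\cdot O$ does not by itself control the number of intersection \emph{points}. The paper obtains transversality directly from the geometry: in $E_0\times E_0$ the graph $\Gamma_n$ of $[n]$ meets $E_0\times\{0\}$ transversally at $E_0[n]$, and this survives the quotient by $\pm1$ since that quotient is \'etale away from $E_0[2]\times E_0[2]$. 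For general $\CC$ one then pulls back along $f=[s_0:s_1]:\CC\to\P^1$, and transversality persists exactly when $f$ is unramified over the $x$-coordinates of points of $\bigcup_n E_0[n]\setminus E_0[2]$.

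This last condition is an \emph{infinite} set of constraints, and your phrase ``generic $s_0,s_1$'' does not secure it over the countable fields the theorem is designed for---that is the whole point of Theorem~\ref{thm:explicit} as opposed to Theorem~\ref{thm:vg}. The paper's key idea here is arithmetic: choose $E_0$ with good reduction at some place $v$, so that torsion $x$-coordinates are (nearly) $v$-integral; then adjust $f$ by a linear fractional transformation so that its branch points have large $v$-denominators, forcing them off the torsion locus. Without this step, or an equivalent device, you cannot conclude transversality for all $n$ simultaneously over a small field.
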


The starting point for our collaboration was a remarkable application
of Theorem~\ref{thm:vg} to the geography of surfaces due to the
second-named author.  We give some background before stating the
result: There has been a great deal of interest in volumes of stable
surfaces, i.e., in the set of values of $K_X^2$ where $X$ runs through
stable surfaces in the sense of Koll\'ar, Shepherd-Barron, and Alexeev
(KSBA).  A key ingredient in the construction of the KSBA
compactifications of moduli spaces of surfaces of general type (see
\cite{KollarShepherd-Barron88}) is a descending chain condition on the
set of $K_X^2$, namely that it admits no strictly decreasing
sequences.  The famous article \cite{Alexeev94} establishes this
condition.  (See also \cite{AlexeevMori04}.)  As this is a set of
positive rational numbers, it must have a minimum, whose value is
still unknown.  Aleexev and Liu have recently found various other
special properties around accumulation points and bounds
\cites{AlexeevLiu19c, AlexeevLiu19b, AlexeevLiu19a, Liu-pp17}.  For
example, \cite{Liu-pp17} shows that when the geometric genus is
positive then the volume can be optimally bounded from below.
This naturally raises the question: Are there \emph{upper}
bounds for $K^2$ of stable surfaces with a fixed geometric genus?

Recall \cite[I.5.5, VI.1]{BarthHulekPetersVandeVenCCS} that a smooth,
projective, non-ruled surface $X$ over a field of characteristic zero
has $c_2(X)\ge0$, so Noether's formula shows that $K_X^2\le12(1+p_g)$,
i.e., the self-intersection of the canonical bundle $K_X$ is bounded
in terms of the geometric genus $p_g$.  The question is whether such
bounds continue to hold for mildly singular surfaces.

Any geometric approach to a negative answer to this question via
surfaces with rational singularities requires a family of
minimal resolution surfaces for any given $p_g>0$ with an unbounded
number of special rational curves. Our work supplies such families and
moreover allows for good control on the singularities involved:

\begin{thm}\label{thm:geography}
  Given integers $g\ge0$ and $N$, there exists a normal
  projective surface $X$ over $\C$ with the following properties:
  \begin{enumerate}
  \item $X$ has geometric genus $p_g=g$.
  \item $X$ has only one singular point, which is log-terminal.
  \item $K_X$ is $\Q$-Cartier and ample.
  \item $K_X^2 > N$.
  \end{enumerate}
\end{thm}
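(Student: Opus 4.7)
The plan is to build $X$ by surgery on a Jacobian elliptic surface supplied by Theorem~\ref{thm:vg}. First I would choose a smooth projective curve $\CC$ and a globally generated line bundle $L$ on $\CC$ so that for very general $a \in V = H^0(L^2\oplus L^3\oplus L^4)$, the associated elliptic surface $\pi:\EE\to\CC$ has geometric genus exactly~$g$. Since for $L$ of sufficiently large degree $d$ a relatively minimal Jacobian elliptic surface satisfies $p_g(\EE)=g(\CC)+d-1$, this constraint is satisfiable with many choices (for example $g(\CC)=0$ and $d=g+1$). Fix such $\CC$, $L$, and $a$; by Theorem~\ref{thm:vg} we have an infinite-order section $P$, singular fibers only of type $I_1$, and, crucially, for every $n\ne0$, $nP$ meets $O$ transversally in $d(n^2-1)$ distinct points.

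Next, fix an integer $n$ (to be chosen large later). I would perform a carefully designed sequence of blow-ups on $\EE$ at some of the $d(n^2-1)$ transversal intersection points of $O$ with $nP$ (and, if needed, at nodes of some $I_1$ fibers meeting these points) to obtain a smooth projective surface $Y$ containing a connected tree $T$ of smooth rational curves, built from the proper transforms of $O$, $nP$, exceptional $(-1)$-curves, and possibly of fiber components. The self-intersections of the curves in $T$ and their intersection pattern would be designed to form a Hirzebruch--Jung chain associated to a cyclic quotient $\frac{1}{r}(1,q)$. Contracting $T$ gives a normal projective surface $X$ with exactly one cyclic quotient (hence log-terminal) singularity; because the surgery is local near a finite set of points on $\EE$ and the contracted curves are rational, $p_g(X)=p_g(\EE)=g$, establishing (1) and (2). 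The $\Q$-Cartier property in (3) holds automatically for log-terminal surface singularities.

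To verify that $K_X$ is ample and that $K_X^2>N$, I would apply the Nakai--Moishezon criterion on $X$, rewriting $K_X\cdot C$ for each curve $C\subset X$ via pullback to $Y$ and the adjunction/projection formula; the contribution from the contracted tree is controlled by the discrepancies of the Hirzebruch--Jung chain, while on $\EE$ we have $K_\EE=\pi^*(K_\CC\otimes L)$, which is nef and meets nontrivially every component not contained in a fiber. The self-intersection $K_X^2$ is computed from $K_Y^2$ corrected by the rational contributions of the contracted chain, and a direct count shows it grows (linearly with the number of transversal intersection points used in the construction) as $n\to\infty$, so for $n$ sufficiently large $K_X^2>N$. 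The main obstacle is the simultaneous combinatorial design of the tree $T$: it must (a) contract to a log-terminal singularity, (b) leave $K_X$ positive on every remaining curve, and (c) contribute an unbounded amount to $K_X^2$ as $n$ grows. The role of Theorem~\ref{thm:vg} is to guarantee a large supply of \emph{transversal} intersection points, without which the local Hirzebruch--Jung structure needed for (a) and (b) would break down.
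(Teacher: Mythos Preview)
Your strategy is exactly the paper's: take $\CC=\P^1$, $d=g+1$, use Theorem~\ref{thm:vg} to obtain $\EE$ and $P$, blow up at intersection points of $O$ with $nP$, contract a rational configuration to a single log-terminal point, and send $n\to\infty$. But you stop short of the decisive step---the actual design of $T$---and your guesses about its contents (exceptional $(-1)$-curves, fiber components, nodes of $I_1$ fibers) point in the wrong direction.

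The paper's construction is simpler than you anticipate. One blows up \emph{all but one} of the $d(n^2-1)$ transversal intersection points of $O$ and $nP$; the exceptional divisors and fiber components are \emph{not} placed in $T$. The strict transforms $C_0$ of $O$ and $C_n$ of $nP$ then satisfy $C_0^2=C_n^2=-(dn^2-1)=:-a$ and $C_0\cdot C_n=1$ (the one point left un-blown-up keeps them connected), so $T=C_0\cup C_n$ is already a Hirzebruch--Jung chain of length two, contracting to a cyclic quotient of type $\tfrac{1}{a^2-1}(1,a)$. The discrepancies are both $-\tfrac{a-2}{a-1}>-1$, confirming log-terminality, and a direct computation gives $K_X^2\sim dn^2$ as $n\to\infty$. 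The Nakai--Moishezon verification is carried out curve-by-curve on $Y$; the only genuinely delicate case is an arbitrary multisection $Q\neq O,nP$, where the Picard-group identity of Lemma~\ref{lemma:nP-in-pic} is used to bound $\tilde Q\cdot C_n$ from below.

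Two specific corrections to your sketch: a Hirzebruch--Jung chain has every self-intersection $\le-2$, so including $(-1)$-curves in $T$ as you propose would destroy the cyclic-quotient structure rather than create it; and the nodes of the $I_1$ fibers are disjoint from the zero section, hence from all the intersection points of $O$ with $nP$, so they play no role.
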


\subsection{Plan of the paper}  In Section~\ref{s:prelims} we present
foundational material on torsion points and intersections on elliptic
surfaces, including a discussion of basic properties of our elliptic
divisibility sequences.  We then reformulate
Theorems~\ref{thm:one-surface'} and \ref{thm:one-surface} as
Theorem~\ref{thm:discreteness}.  We prove
Theorem~\ref{thm:discreteness} in Section~\ref{s:one-surface}.
Section~\ref{s:moduli} discusses two moduli spaces which play a key
role in the proof of Theorem~\ref{thm:vg}.
Section~\ref{s:constructing-EE} discusses a construction of elliptic
surfaces equipped with extra structure associated to elements in
certain Riemann-Roch spaces.  We then prove Theorem~\ref{thm:vg} in
Section~\ref{s:v-g-surfaces}.  In Section~\ref{s:explicit}, we give an
explicit construction of surfaces satisfying the requirements of
Theorem~\ref{thm:explicit}.  Finally, in Section~\ref{s:geography}, we
prove Theorem~\ref{thm:geography}.

\subsection{Acknowledgements}
The first-named author thanks Seoyoung Kim, Nicole Looper, and Joe
Silverman for helpful conversations at the 2019 AMS Mathematics
Research Community meeting in Whispering Pines, Rhode Island, and for
pointing out \cite{GhiocaHsiaTucker18} and its antecedents.  He also
thanks the Simons Foundation for partial support in the form of
Collaboration Grant 359573.  The second-named author thanks FONDECYT
for support from grant 1190066.  Both authors thank Matthias Sch\"utt
for comments and corrections, and they thank Pietro Corvaja, Brian
Lawrence, and Umberto Zannier for their comments on an earlier version
of this paper and their pointers to related literature, notably the
preprint \cite{CorvajaDemeioMasserZannierpp}.

\section{Preliminaries on torsion and intersections}\label{s:prelims}
In this section we gather various foundational results on torsion,
intersections, heights, and elliptic divisibility sequences.  Some of
this material also appears in \cite{Ingram-etal12}, although our point
of view is more geometric.  Throughout, $\pi:\EE\to\CC$ will be a
relatively minimal Jacobian elliptic surface over a field $k$ with
zero section $O$.

\subsection{Multiplication by $n$}\label{ss:mult-by-n}
Let $\EE^{sm}$ denote the locus where $\pi$ is smooth (i.e., the
complement of the singular points in the bad fibers).  Then by
\cite[Prop.~II.2.7]{DeligneRapoport73}, $\EE^{sm}$ is a commutative group
scheme over $\CC$.  Let $n$ be an integer not divisible by the
characteristic of $k$.  Consider the homomorphism of group schemes
given multiplication by $n$: $[n]:\EE^{sm}\to\EE^{sm}$.

Clearly, $[n]$ fixes the zero section $O$ pointwise.  If $x\in O$, the
tangent space to $\EE^{sm}$ at $x$ splits canonically into the sum of two
lines, namely the tangent space to $O$ at $x$ and the tangent space to
the fiber of $\pi$ through $x$.  Since $[n]$ fixes $O$, $[n]$ acts as
the identity on the former.  A calculation in the formal group of
$\EE^{sm}$ \cite[Ch.~IV]{SilvermanAEC} shows that $[n]$ acts as
multiplication by $n$ on the tangent space to the fiber of $\pi$ through
$x$.  It follows that $[n]$ is \'etale at every point of $O$, and since
$[n]$ is a group scheme homomorphism it is \'etale everywhere.

The morphism $[n]$ is also quasi-finite: it has degree $n^2$ on the
smooth geometric fibers of $\pi$, and degree dividing $n^2$ on all
geometric fibers \cite[Ch.~III and \S VII.6]{SilvermanAEC}.  It is not
in general finite if $\pi$ has singular fibers.

If $P:\CC\to\EE$ is a section of $\pi$, $P$ necessarily lands in the
smooth locus $\EE^{sm}$ and we may define a new section $nP$ as the
composition $[n]\circ P$.  This is the meaning of the notation $nP$
used in the introduction.  

\subsection{Torsion}
With $\EE$ as  above and $n>0$ and relatively prime to the
characteristic of $k$, we define
\[\EE[n]=[n]^{-1}(O),\]
i.e, $\EE[n]$ is the inverse image the zero section under $[n]$.
Since $[n]$ is \'etale, $\EE[n]$ is a reduced, closed subscheme of
$\EE^{sm}$ of dimension 1, and in particular, locally closed in $\EE$.
Since $[n]$ is quasi-finite, $\EE[n]$ is \'etale and quasi-finite over
$\CC$ of generic degree $n^2$.  It is in general not finite over
$\CC$.

The fiber of $\EE[n]$ over a geometric point $t$ of $\CC$ consists of the
points of $\pi^{-1}(t)$ with order divisible by $n$.  We define
\[\EE[n]'\subset\EE[n]\]
to be the subscheme whose fiber over $t$ consists of the  points of
$\pi^{-1}(t)$ of order exactly $n$.  If $m$ divides $n$, then
$\EE[m]'$ is a closed subscheme of $\EE[n]$, and we have a
disjoint union
\begin{equation}\label{eq:EEn-union}
\EE[n]=\cup_{m|n}\EE[m]'  
\end{equation}
where $m$ runs over the positive divisors of $n$.  Each $\EE[m]'$ is a
union of irreducible components of $\EE[n]$ and is \'etale and
quasi-finite over $\CC$.   Note that $\EE[1]=\EE[1]'=O$.   

We refer to the unions of irreducible components of $\EE[n]$ as
``torsion multisections''.

\subsection{Divisibility sequences}
In the introduction, we defined divisors $D_n$ for $n\ge1$ by
\[D_n=O^*(nP).\]  
In this section we examine alternative definitions and properties of
these divisors always assuming that $n$ is relatively prime to the
characteristic of $k$.

For two smooth curves $C_1$ and $C_2$ on $\EE$ with no irreducible
components in common, write $C_1\cap C_2$ for the intersection
zero-cycle.  This is a zero-dimensional closed subscheme of $\EE$.
With this notation,
\[D_n=\pi_*\left(nP\cap O\right)=\pi_*\left(nP\cap \EE[1]\right).\]

Note that $nP$ meets $O=\EE[1]$ over $t$ if and only if $P$ meets
$\EE[n]$ over $t$, and since $[n]$ is \'etale, the intersection
multiplicity of $nP$ and $\EE[1]$ over $t$ equals the intersection
multiplicity  of $P$ and $\EE[n]$ over $t$.  In other words, we have
\begin{equation}\label{eq:Dn-alt}
  D_n=\pi_*\left(P\cap \EE[n]\right).
\end{equation}

Define
\[D_n'=\pi_*\left(P\cap \EE[n]'\right).\]
Then the disjoint union \eqref{eq:EEn-union} yields a decomposition of
$D_n$ into effective divisors:
\begin{equation}\label{eq:Dn-sum}
D_n=\sum_{m|n}D_m'  
\end{equation}
where the sum is over positive divisors of $n$.

Note in particular that if $t$ is a closed point of $\CC$ and $P(t)$
is a torsion point, say of order exactly $m$, then $t$ appears in $D_n$
if and only if $m$ divides $n$, and the multiplicity of $t$  in such
$D_n$ is equal to the multiplicity of $t$ in $D_m'$.  

\begin{rem}
  A section $P$ can meet at most one torsion point over a given
  $t\in\CC$.  This implies that if $m_1\neq m_2$, then $D_{m_1}'$ and
  $D_{m_2}'$ have disjoint support.  In particular, as soon as
  $D_n'\neq0$, $D_n$ has ``primitive divisors' i.e., points in its
  support which are not in the support of $D_m$ for $m<n$.  The
  existence of primitive divisors for all sufficiently large $n$ is
  established in \cite[\S5]{Ingram-etal12} by showing that $D_n'\neq0$
  for all sufficiently large $n$.  The key idea is an estimation of
  intersection numbers using heights as in Section~\ref{ss:heights}
  below.  Another simple proof of the existence of primitive divisors
  (suggested by a referee) can be given using the fact that the
  ``Betti coordinates'' (as in Section~\ref{s:one-surface}) of a
  non-torsion section $P$ give rise to a locally defined, open map
  from $\CC$ to $\R^2$.  The existence and openness of this map can
  also be viewed as the key point in the simplest case ($I_0$) of the
  proof of Theorem~\ref{thm:discreteness}.
\end{rem}

We now state a result which implies Theorems~\ref{thm:one-surface'}
and \ref{thm:one-surface}:

\begin{thm}\label{thm:discreteness}
  Let $\EE\to\CC$ be a relatively minimal Jacobian elliptic
  surface over the complex numbers $\C$, with zero section $O$ and
  another section $P$ which is not torsion.  Then the set
\[T_{tor}:=\bigcup_{n\neq0}\left\{t\in\CC\left|\text{ $P$ is tangent to $\EE[n]$
      over $t$}\right.\right\}\]
is finite.
\end{thm}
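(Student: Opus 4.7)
The plan is to work analytically over $\C$ and introduce Betti coordinates so that tangency of $P$ with \emph{all} the torsion multisections $\EE[n]$ simultaneously becomes a single real-analytic equation on $\CC$. Let $\CC^{sm}\subset\CC$ denote the open, cofinite locus over which $\pi$ is smooth. On a disk $U\subset\CC^{sm}$ (equivalently, on the universal cover of $\CC^{sm}$), trivialize the local system $R_1\pi_*\Z$ by a $\Z$-basis $\gamma_1,\gamma_2$ and choose a holomorphic family $\omega_t$ of invariant differentials on the fibers. The periods $\omega_i(t)=\int_{\gamma_i(t)}\omega_t$ are local holomorphic functions of $t$. Locally, $P$ is represented by a holomorphic function $\tilde u_P(t)\in\C$ modulo the period lattice, which we decompose as $\tilde u_P(t)=\alpha(t)\omega_1(t)+\beta(t)\omega_2(t)$ with $\alpha,\beta\colon U\to\R$ real-analytic. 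The torsion multisection $\EE[n]$ corresponds to $(\alpha,\beta)\in\frac{1}{n}\Z^2$, and the Betti foliation has as its leaves the level sets of $(\alpha,\beta)$.

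Because each $\EE[n]$ is \'etale over $\CC$, the tangency of $P$ to some $\EE[n]$ at $t$ is equivalent to $P(t)$ being torsion together with $d\alpha|_t=d\beta|_t=0$ (i.e., $P$ tangent to the Betti foliation). Using $\partial_{\bar t}\tilde u_P=0$ and $\partial_{\bar t}\omega_i=0$ one obtains $(\partial_{\bar t}\alpha)\omega_1+(\partial_{\bar t}\beta)\omega_2=0$, so $(\partial_{\bar t}\alpha,\partial_{\bar t}\beta)=c(t)(\omega_2,-\omega_1)$ for a single complex-valued real-analytic function $c$ on $\CC^{sm}$; combined with the reality of $\alpha,\beta$ (which gives $\partial_t\alpha=\overline{\partial_{\bar t}\alpha}$), the tangency condition collapses to $c(t)=0$.

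Set $S=\{c=0\}\subset\CC^{sm}$. On any connected component of $S$ of positive real dimension, $d\alpha=d\beta=0$ forces $(\alpha,\beta)$ to be constant along that component. If the constants were rational, $P$ restricted to the component would lie inside some $\EE[n]$; but $P$ is an irreducible complex algebraic curve and $\EE[n]$ is a 1-dimensional algebraic subvariety of $\EE$, so $P\cap\EE[n]$ is either all of $P$ (impossible, since $P$ is non-torsion) or finite, ruling out a positive-dimensional real-analytic component with rational Betti constants. Hence every positive-dimensional component of $S$ carries irrational Betti constants and no torsion point of $P$, so $T_{tor}$ is contained in the set of isolated zeros of $c$. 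In particular, the degenerate case $c\equiv 0$ on $\CC^{sm}$ corresponds to $P$ having constant irrational Betti coordinates, whence $T_{tor}=\emptyset$.

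The step I expect to be the main obstacle is showing that the isolated zeros of $c$ do not accumulate at the finitely many singular fibers $t^*\in\CC\setminus\CC^{sm}$, since $\CC^{sm}$ is not compact. I would address this by a local analysis near each Kodaira fiber type, using the standard asymptotic forms of the periods (e.g.\ $\omega_2/\omega_1\sim\frac{b}{2\pi i}\log(t-t^*)$ for type $I_b$) to show that the Betti coordinates of $P$ have controlled limits at $t^*$ (for instance $\beta(t)\to 0$ for $I_b$) and that $c$ extends real-analytically across $t^*$ with only isolated zeros nearby. Should such an extension prove delicate, a backup is an intersection-theoretic argument: accumulation of tangencies at $t^*$ would force arbitrarily large local intersection multiplicities in $P\cap\EE[n]$ for varying $n$, incompatible with the known quadratic growth of $(nP)\cdot O$ in $n$.
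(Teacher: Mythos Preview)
Your overall strategy---introducing Betti coordinates and reducing tangency with torsion multisections to vanishing of the differential of the Betti map $(\alpha,\beta)$---is exactly the paper's approach. On the good-reduction locus your execution differs somewhat: the paper proves the larger set $T_{Betti}$ of tangencies with \emph{arbitrary} Betti leaves is discrete, using the \L ojasiewicz gradient inequality, whereas you exploit rationality and the algebraicity of $P\cap\EE[n]$ to rule out points of $T_{tor}$ lying on positive-dimensional pieces of $\{c=0\}$. Your reduction to a single complex-valued real-analytic function $c$ is correct (and $c$ is monodromy-invariant, hence well defined on $\CC^{sm}$), but to pass from ``not on a positive-dimensional component of $\{c=0\}$'' to ``isolated in $\{c=0\}$'' you should invoke the curve selection lemma for semi-analytic sets: if $t_0$ were non-isolated, a real-analytic arc $\gamma$ in $\{c=0\}$ through $t_0$ would have $(\alpha,\beta)$ constant and rational, forcing $P\circ\gamma\subset\EE[n]$, contradicting finiteness of $P\cap\EE[n]$.

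The real gap is exactly where you flag it, and neither of your proposed fixes works as stated. First, $c$ will \emph{not} extend real-analytically across a fiber of type $I_b$ or $I_b^*$: the period $\omega_2$ has a logarithmic singularity there, and one Betti coordinate of $P$ behaves like $\log|f(t)|/\log|t-t^*|$ in Kodaira's local model, which is continuous but not real-analytic at $t^*$. Second, the intersection-theoretic backup fails because the accumulating tangencies lie on $\EE[n]$ for \emph{different, unbounded} values of $n$; each contributes only a local multiplicity $\ge 2$ to $P\cap\EE[n_k]$, which is negligible against the global count $(n_kP)\cdot O\sim \tfrac{ht(P)}{2}n_k^2$, so there is no contradiction. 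The paper's proof handles each Kodaira type separately using Kodaira's explicit analytic models: for $I_b$ one represents $P$ by a holomorphic $f:\Delta\to\C^\times$, computes that tangency with a Betti leaf at $z$ is the equation $zf'(z)/f(z)=\log|f(z)|/\log|z|$, and derives a contradiction from the incompatible asymptotics of the two sides as $z\to 0$; the additive types reduce to $I_0$ or $I_b$ after a ramified finite base change. This local analysis is the technical heart of the argument and cannot be shortcut.
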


\subsection{Proof that Theorem~\ref{thm:discreteness} implies
Theorems~\ref{thm:one-surface'} and \ref{thm:one-surface}}
First we note that the general cases of Theorems~\ref{thm:one-surface'}
and \ref{thm:one-surface} follow from the case $k=\C$.  Indeed, since
the hypotheses and conclusion of Theorems~\ref{thm:one-surface'} and
\ref{thm:one-surface} are insensitive to the ground field, we may
replace $k$ with a subfield $k'$ which is finitely generated over $\Q$
(take the field generated by the coefficients defining $\CC$, $\EE$,
$\pi$, and $P$), then embed $k'$ in $\C$.  Thus it suffices to treat
the case $k=\C$.

Next, by the definition of $D_n$, to say that $nP$ is tangent to $O$
over $t$ is to say that $t$ appears in $D_n$ with multiplicity greater
than 1.  By the equality \eqref{eq:Dn-alt}, to say that $t$ appears in
$D_n$ with multiplicity greater than 1 is to say that $P$ is tangent
to $\EE[n]$ over $t$.  Thus the set $T_{tor}$ of
Theorem~\ref{thm:discreteness} is equal to the set $T$ of
Theorem~\ref{thm:one-surface'}, and Theorem~\ref{thm:discreteness} is
equivalent to the case $k=\C$ of Theorem~\ref{thm:one-surface'}.

To finish, we show that Theorems~\ref{thm:one-surface'} and
\ref{thm:one-surface} are equivalent.  First note that points (1) and
(2) of Theorem~\ref{thm:one-surface} are equivalent by the definition
of $D_n$.  Moreover, $D_m'$ is non-reduced if and only if $D_n$
is non-reduced for all multiples $n$ of $m$.  Thus point (3) of
Theorem~\ref{thm:one-surface} implies points (1) and (2), and 
Theorem~\ref{thm:one-surface} is equivalent to the statement that the
set of $m$ such that $D_m'$ is non-reduced is finite.

Now consider the ``incidence correspondence'' 
\[I:=\left\{(t,m)\left|\ m>0\text{ and }P\text{ is tangent to
      }\EE[m]'\text{ over }t\right.\right\} \subset\CC\times\Z_{>0}.\]
The set $T$ of Theorem~\ref{thm:one-surface'} is the image of the
projection $I\to\CC$ and the set $M$ of Theorem~\ref{thm:one-surface}
is the image of the projection $I\to\Z_{>0}$.  The fibers of $I\to\CC$
are finite (and in fact empty or singletons) because $P$ meets
$\EE[m]'$ for at most one value of $m$ and \emph{a fortiori} can be
tangent to at most one $\EE[m]'$.  The fibers of $I\to\Z_{>0}$ are
finite because for a fixed $m$, $P$ meets $\EE[m]'$ at only finitely
many points, so \emph{a fortiori} can be tangent to $\EE[m]'$ at only
finitely many points.  This establishes that
Theorem~\ref{thm:one-surface'} and Theorem~\ref{thm:one-surface} are
equivalent, and it completes the proof that
Theorem~\ref{thm:discreteness} implies Theorems~\ref{thm:one-surface'}
and \ref{thm:one-surface} \qed

We will prove Theorem~\ref{thm:discreteness} in
Section~\ref{s:one-surface}.  First, we review material on heights
used later in the paper.

\subsection{Heights}\label{ss:heights}
We refer to \cite{CoxZucker79} or \cite{Shioda90} or \cite{Shioda99}
or \cite{Ulmer13a} for the basic assertions on heights in this
section.  As usual, $\pi:\EE\to\CC$ is a relatively minimal Jacobian
elliptic surface over a field $k$.

Given a section $P$ of $\pi$, there is a unique $\Q$-divisor $C_P$
supported on the non-identity components of the fibers of $\pi$ with
the property that $P-O+C_P$ has zero intersection multiplicity with
every irreducible component of every fiber of $\pi$.  There is a
simple recipe for $C_P$ that depends only on the components of the
reducible fibers met by $P$, and in particular, for a fixed $\EE$,
there are only finitely many possibilities for $C_P$ as $P$ varies
over all sections.  If $\pi$ has irreducible fibers, or more
generally, if $P$ passes through the identity component of every
fiber, then $C_p=0$.

There is a canonical $\Q$-valued symmetric bilinear form on the group of
sections of $\EE$ defined by
\begin{equation}\label{eq:ht-def}
  \langle P,Q\rangle:=-(P-O+C_P).(Q-O)
\end{equation}
where the dot refers to the intersection number on $\EE$. If
$\EE\to\CC$ is non-constant (i.e., is not isomorphic over
$\overline{k}$ to a product $E_0\times\CC$), then this pairing is
non-degenerate modulo torsion and positive definite.  We define
$ht(P)=\langle P,P\rangle$.  (Note that this is twice the height
considered in \cite{Ingram-etal12}.)

\begin{lemma}\label{lemma:int-numbers}
For $\EE$ and $P$ as above,
\[(nP).O=\frac{ht(P)}2n^2+O(1).\]
If $\pi$ has irreducible fibers and $d=\deg O^*(\Omega^1_{\EE/\CC})$,
then
\[(nP).O=\frac{ht(P)}2n^2-d.\]
\end{lemma}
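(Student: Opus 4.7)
The plan is to combine three ingredients: (i) the adjunction formula computes the self-intersection $S^{2}$ of any section $S$; (ii) bilinearity of the height pairing reduces the problem to a single intersection identity; (iii) finiteness of the possible correction divisors $C_{nP}$ turns the identity into the claimed asymptotic (and exact) formula.

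\emph{Step 1: compute $S^{2}$.} For any section $S$, we have $S\cong\CC$, so adjunction gives $(K_{\EE}+S).S = 2g(\CC)-2$. Since $\pi$ is relatively minimal with a section, the relative dualizing sheaf satisfies $\omega_{\EE/\CC}\cong\pi^{*}O^{*}\omega_{\EE/\CC}$, a line bundle of degree $d$ pulled back from $\CC$; hence $K_{\EE}=\pi^{*}(O^{*}\omega_{\EE/\CC}\otimes\omega_{\CC})$. Using $\pi^{*}D.S=\deg D$ for a section $S$, we get $K_{\EE}.S=d+2g(\CC)-2$, and so $S^{2}=-d$. In particular $O^{2}=(nP)^{2}=-d$.

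\emph{Step 2: expand the height.} By bilinearity, $\operatorname{ht}(nP)=n^{2}\operatorname{ht}(P)$. Expanding the definition,
\[
\operatorname{ht}(nP)=-(nP-O+C_{nP}).(nP-O)=-(nP)^{2}+2(nP).O-O^{2}-C_{nP}.(nP)+C_{nP}.O.
\]
The divisor $C_{nP}$ is supported on non-identity components of fibers, while $O$ meets each fiber in its identity component, so $C_{nP}.O=0$. Substituting $O^{2}=(nP)^{2}=-d$ gives
\[
n^{2}\operatorname{ht}(P)=2d+2(nP).O-C_{nP}.(nP),
\]
and therefore
\[
(nP).O=\frac{\operatorname{ht}(P)}{2}n^{2}-d+\frac{C_{nP}.(nP)}{2}.
\]

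\emph{Step 3: control the error.} As recalled just before the lemma, $C_{Q}$ depends only on the collection of non-identity components of reducible fibers met by $Q$, and there are only finitely many such combinations for a fixed $\EE$. Hence $C_{nP}$ ranges over a finite set as $n$ varies, and $C_{nP}.(nP)$ is bounded; this yields the first statement with error $O(1)$. If $\pi$ has irreducible fibers, then there are no non-identity components at all, so $C_{nP}=0$ for every $n$, and the formula becomes the exact equality $(nP).O=(\operatorname{ht}(P)/2)n^{2}-d$.

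The argument is essentially bookkeeping with intersection numbers; the only nontrivial input is $S^{2}=-d$, which is a standard consequence of adjunction together with the structure of $\omega_{\EE/\CC}$ on a relatively minimal Jacobian elliptic surface, and could equally well be quoted from \cite{Shioda90}.
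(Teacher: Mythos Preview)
Your proof is correct and follows essentially the same approach as the paper: compute $S^2=-d$ via adjunction and the canonical bundle formula, expand $\operatorname{ht}(nP)=n^2\operatorname{ht}(P)$ using the defining formula for the pairing, and use finiteness of the possible correction divisors to get the $O(1)$ bound. One small point worth making explicit in Step~3: the deduction ``$C_{nP}$ ranges over a finite set, hence $C_{nP}.(nP)$ is bounded'' needs the observation that $C_{nP}.(nP)$ depends only on which fiber components $nP$ meets (the same combinatorial data that determines $C_{nP}$ itself); alternatively, since $(nP-O+C_{nP})$ is orthogonal to all fiber components, $C_{nP}.(nP-O)=-C_{nP}^2$, which is manifestly bounded. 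The paper glosses over this in the same way, so your level of detail matches.
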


\begin{proof}
It follows from the canonical bundle formula for elliptic surfaces,
adjunction, and the definition of $d$ that every section $P$ of
$\pi$ satisfies $P^2=-d$.  From the height formula, we find
\[n^2ht(P)=ht(nP)=-(nP-O+C_{nP}).(nP-O),\]
and so 
\[(nP).O=\frac{ht(P)}2n^2-d+C_{nP}.(nP-O)
=\frac{ht(P)}2n^2+O(1).\]
If $\pi$ has irreducible fibers, then $C_Q=0$ for all $Q$ and we
deduce the stated exact formula.
\end{proof}

In the following lemma we use square brackets to indicate the class of
a curve in $\NS(\EE)$, the N\'eron-Severi group of $\EE$.  This allows
us to distinguish between $n[P]$ ($n$ times the class of $P$) and
$[nP]$ (the class of $nP$).

\begin{lemma}\label{lemma:nP-in-pic}
Suppose that $\pi:\EE\to\CC$ has irreducible fibers, $d=\deg
O^*(\Omega^1_{\EE/\CC})$, and $P$ is a section of $\pi$ which does not
meet $O$.  Let $F$ be a fiber of $\pi$.  Then we have an equality
\[[nP]=n[P]+(1-n)[O]+d(n^2-n)[F]\]
in $\NS(\EE)$.
\end{lemma}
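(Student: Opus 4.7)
The plan is to reduce the lemma to the Shioda--Tate homomorphism property. When $\pi$ has irreducible fibers, the assignment $\phi\colon\MW(\EE)\to\NS(\EE)\otimes\Q$ defined by $\phi(Q):=[Q]-[O]-([Q].[O]+d)[F]$ is a group homomorphism; this is part of the standard height-pairing formalism alluded to in Section~\ref{ss:heights}. Granting this, the lemma drops out: the hypothesis $P\cap O=\emptyset$ gives $[P].[O]=0$, so $\phi(P)=[P]-[O]-d[F]$; combining the hypothesis with the canonical-bundle-formula identity $[P]^2=[O]^2=-d$ yields $ht(P)=-(P-O).(P-O)=2d$; Lemma~\ref{lemma:int-numbers} then gives $[nP].[O]=d(n^2-1)$, so $\phi(nP)=[nP]-[O]-dn^2[F]$; and $\phi(nP)=n\,\phi(P)$ rearranges to the claimed formula.

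If I wanted a self-contained derivation that did not invoke the homomorphism property of $\phi$, I would instead form the candidate error class $D:=[nP]-n[P]-(1-n)[O]-d(n^2-n)[F]$ and verify directly that $D.[O]=0$, $D.[F]=0$, and $D^2=0$, using the intersection numbers above together with $[nP]^2=-d$, $[F]^2=0$, and $[O].[F]=[P].[F]=1$. Since the Gram matrix of $\{[O],[F]\}$ has determinant $-1$ and hence signature $(1,1)$, the Hodge index theorem forces the intersection pairing on $\langle[O],[F]\rangle^{\perp}\subset\NS(\EE)\otimes\Q$ to be negative definite; so the conditions $D\in\langle[O],[F]\rangle^{\perp}$ and $D^2=0$ force $D=0$ in $\NS(\EE)\otimes\Q$, and hence in $\NS(\EE)$ up to torsion.

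The main obstacle is essentially bookkeeping: the computation $D^2=0$ requires that several terms of orders $d$, $dn$, and $dn^2$ cancel exactly, and a sign error in collecting them would be easy to make. Conceptually there is nothing at stake beyond the elementary intersection theory on $\EE$ already recorded in Section~\ref{ss:heights} and the Hodge index theorem.
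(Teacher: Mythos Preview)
Your primary route via the Shioda--Tate homomorphism is the paper's argument in packaged form. The paper observes directly that $[nP]-[O]=n([P]-[O])$ in the Picard group of the generic fiber, so with irreducible fibers $[nP]-n[P]-(1-n)[O]=c[F]$ in $\NS(\EE)$ for some integer $c$, and then intersects with $[O]$ (using Lemma~\ref{lemma:int-numbers}) to solve for $c$; the homomorphism property of your $\phi$ is exactly this generic-fiber statement, and your computation $\phi(nP)=n\phi(P)$ unwinds to the same intersection with $[O]$.

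Your alternative Hodge-index route has a gap as written: expanding $D^2$ produces the cross term $[nP].[P]$, which is not among the inputs you list and does not follow from the intersection numbers recorded in Section~\ref{ss:heights} alone. You can supply it by noting that translation by $P$ extends to an automorphism of the relatively minimal $\EE$, giving $[nP].[P]=[(n-1)P].[O]=d\bigl((n-1)^2-1\bigr)=d(n^2-2n)$; with that in hand the verification $D^2=0$ does go through. Even so, as you acknowledge, Hodge index only yields $D=0$ in $\NS(\EE)$ modulo torsion, which is slightly weaker than the stated equality in $\NS(\EE)$; the generic-fiber argument used in the paper (and implicit in your first approach) gives the integral statement directly.
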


\begin{proof}
We have an equality $[nP]-[O]=n([P]-[O])$ in the Picard group of the
generic fiber of $\EE$, so there is an equality of the form 
\[[nP]=n[P]+(1-n)[O]+c[F]\]
in $\NS(\EE)$, and we just need to determine the coefficient of $[F]$.
We do this by intersecting with $[O]$.  By assumption $[P].[O]=0$, so
$ht(P)=2d$.  By the previous lemma, $[nP].[O]=d(n^2-1)$ and solving
for $c$ yields $c=d(n^2-n)$.
\end{proof}

\section{Proof of Theorem~\ref{thm:discreteness}}\label{s:one-surface} 
We first note that Theorem~\ref{thm:discreteness} is a statement about
intersections on an elliptic surface over the complex numbers.  To
prove it, we may replace $\EE$ and $\CC$ with the corresponding compex
manifolds and make use of the classical topology, i.e., the
topology induced by the metric topology on $\C$.  For the rest of this
section, we make this replacement, although we will not change the
notation.  

Our strategy will be to show that the subset $T_{tor}\subset\CC$ is
discrete (in the classical topology). This implies
Theorem~\ref{thm:discreteness} since $\CC$ is compact.  We note in
passing that the set of points of intersection of $P$ and $\EE[n]$ for
varying $n$ is usually everywhere classically dense in $P$, so the
discreteness that lies at the heart of the theorem is not evident.

In fact, will consider a more general set of tangencies and prove that
a certain subset $T_{Betti}\subset\CC$ is discrete (and thus finite)
and contains all points of $T_{tor}$ over which $\EE$ has good
reduction.  Since the set of points of bad reduction is finite, this
will establish that $T_{tor}$ is also discrete and finite.

We will establish the desired discreteness by using the complex
analytic description of $\EE\to\CC$ given by Kodaira in
\cite[\S8]{Kodaira63}.  Let $\CC^0\subset\CC$ be the maximal open
subset over which $\EE$ has good reduction, and let
$\EE^0=\pi^{-1}(\CC^0)$.

Let $t\in\CC^0$.  Write $\HH$ for the upper half plane.  Then there is
a neighborhood $U$ of $t$ biholomorphic to a disk $\Delta$ and
holomorphic functions $\tau:\Delta\to\HH$ and $w:\Delta\to\C$ such
that $\pi^{-1}(U)\to U$ sits in a diagram
\begin{equation}\label{eq:triv1}
\xymatrix{\pi^{-1}(U)\ar[d]\ar[r]
&(\Delta\times\C)/(\Z\tau+\Z)\ar[d]\\
U\ar@/^/[u]^{P_{|U}}\ar[r]
&\Delta\ar@/_/[u]_{[w]}}
\end{equation}
where the horizontal maps are biholomorphic, and
$(\Delta\times\C)/(\Z\tau+\Z)$ means the quotient of
$\Delta\times\C$ by $\Z^2$ acting as
\[(a,b)(z,w)=\left(z,w+a\tau(z)+b\right).\] 
For $z\in\Delta$, corresponding to $u\in U$, $P(u)$ corresponds to
$[w](z)$, which is the class of $w(z)$ in
$\{z\}\times\C/(\Z\tau(z)+\Z)$.  We also assume $t\in U$ corresponds
to $0\in\Delta$.

Next, we consider a trivialization of $\pi^{-1}(U)\to U$ as a real
analytic manifold.  Introduce real coordinates as follows: $z=x+iy$ on
the base $\Delta$, $\tau=\rho+i\sigma$ on the upper half plane $\HH$,
and $w=u+iv$ in the $\C$ which uniformizes the fibers of
$\pi^{-1}(U)\to U$.  Let $(r,s)$ be coordinates on $\R^2$, and
note that $w=r\tau+s$ if and only if $r=v/\sigma$ and
$s=(u\sigma-v\rho)/\sigma$.

Consider the diagram
\begin{equation}\label{eq:triv2}
\xymatrix{\Delta\times\C\ar[rr]\ar[d]
&&\Delta\times\R^2\ar[d]\\
(\Delta\times\C)/(\Z\tau+\Z)\ar[rr]\ar[rd]
&&\Delta\times(\R/\Z)^2\ar[ld]\\
&\Delta&}
\end{equation}
where the upper horizontal map is
\[(z,w)=(x+iy,u+iv)\mapsto
  (z,r,s)=\left(z,\frac{v}{\sigma},\frac{u\sigma-v\rho}{\sigma}\right),\]
with inverse
\[(z,w)=(z,r\tau(z)+s)\mapsfrom(z,r,s).\]
The two vertical maps are the natural quotients, the middle horizontal
map is induced by the upper horizontal map, and the diagonal maps are
the projections to the first factor.

The top horizontal map is a real-analytic isomorphism which is
$\R$-linear on each fiber of the projection to $\Delta$.  The choice of
this map is motivated by the fact that torsion sections of $\EE$ over
$U$ correspond the surfaces
$\Delta\times(r,s)\subset\Delta\times(\R/\Z)^2$ where $r$ and $s$ are
rational numbers.  In other words, we have changed coordinates so that
every torsion section becomes a constant section.  It will
be of interest to consider all the horizontal sections
$\Delta\times(r,s)$ for arbitrary real numbers $r$ and $s$.

\begin{defn}
  With notation as above and $(r,s)\in\R^2$, define the \emph{local
    Betti leaf} $\LL_{r,s}\subset\pi^{-1}(U)$ as the image of the map
  $U\to(\Delta\times\C)/(\Z\tau+\Z)\cong\pi^{-1}(U)$ sending $z$ to
  the class of $(z,r\tau(z)+s)$.
\end{defn}

This terminology is inspired by \cite{CorvajaMasserZannier18}, where
$r$ and $s$ are called ``Betti coordinates''.  Clearly the assignment
$(r,s)\mapsto\LL_{r,s}$ factors through $(\R/\Z)^2$.  Each $\LL_{r,s}$
is a closed holomorphic submanifold of $\pi^{-1}(U)$, and the set of
$\LL_{r,s}$ as $(r,s)$ runs through $(\R/\Z)^2$ is a foliation of
$\pi^{-1}(U)$.  Although the indexing of $\LL_{r,s}$ by $(r,s)$
depends on the choice of period map $\tau$, the submanifolds
$\LL_{r,s}$ themselves are intrinsic (i.e., independent of $\tau$).
There is a corresponding global foliation of $\EE^0$ which we will not
consider in this paper, except implicitly in the following remark: If
for some non-empty open $U\subset\CC^0$ and some $(r,s)$,
$P(U)=\LL_{r,s}$ (i.e., $P$ lands in a leaf of the local foliation),
then by analytic continuation, the same holds over every open.  In
this case, we say ``$P$ lies in the Betti foliation''.

Note that if $(r,s)\in\Q^2$, then each point of $\LL_{r,s}$ is a
torsion point in its fiber.  More precisely, if $n$ is the smallest
positive integer such that $(nr,ns)\in\Z^2$, then $\LL_{r,s}$ is a
connected component of $\EE[n]'\cap\pi^{-1}(U)$.  On the other hand,
if $(r,s)\in\R^2\setminus\Q^2$, then $\LL_{r,s}$ is disjoint from
every $\EE[n]$.  Thus, if $P$ lies in the Betti foliation, it is a
torsion section (which we have ruled out by hypothesis) or it meets no
torsion sections over $\CC^0$ and so is not tangent to any torsion
section over $\CC^0$.  In the latter case,
Theorem~\ref{thm:discreteness} is obviously true, so we may assume
from now on that $P$ does not lie in the Betti foliation.

We now define
\[ T_{Betti}:=\left\{t\in \CC^0|P\text{ is tangent to some
    }\LL_{r,s}\text{ over }t\right\}.\]

The preceding paragraph shows that $T_{tor}\cap\CC^0\subset T_{Betti}$
and so, as explained above, to prove Theorem~\ref{thm:discreteness} it
will suffice to prove that $T_{Betti}$ is a discrete subset of $\CC$.
More formally:

\begin{claim}\label{claim}
  For every $t\in \CC$, there is a classical open neighborhood $U_t$
  of\ \ $t$ in $\CC$ such that $(U_t\setminus\{t\})\cap
  T_{Betti}=\emptyset$.  In other words, for every $(r,s)\in\R^2$, $P$ is not
  tangent to $\LL_{r,s}$ over $U_t\setminus\{t\}$. 
\end{claim}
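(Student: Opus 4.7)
The plan is to work in the classical topology on the good-reduction locus $\CC^0$ using the trivialisation (3.1), reduce the tangency condition locally to the vanishing of a single complex-valued real-analytic function $F$, show that $F$ satisfies a Carleman--Bers--Vekua equation, and invoke the similarity principle for pseudo-analytic functions to conclude that either $F\equiv 0$ (which will force $P$ into a Betti leaf, contradicting our reduction) or its zero set is discrete. Points of $\CC\setminus\CC^0$ require a separate analysis on a punctured disk, which is where the main technical obstacle lies.

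Fix $t\in\CC^0$ and work on $U=\Delta$ using (3.1). The section $P$ corresponds to the holomorphic section $z\mapsto(z,w(z))$ for some holomorphic $w:\Delta\to\C$, and each Betti leaf $\LL_{r_0,s_0}$ to the holomorphic section $z\mapsto(z,r_0\tau(z)+s_0)$. Two holomorphic sections over $\Delta$ are tangent at $z_0$ iff they meet and have the same fibre-direction derivative, so (noting that the unique leaf through $P(z_0)$ has first Betti coordinate $r(z_0):=\Im w(z_0)/\Im\tau(z_0)$) the tangency condition at $z_0$ reduces to the single complex equation $F(z_0)=0$, where
$$F(z):=w'(z)-r(z)\,\tau'(z).$$
A direct computation, using $v=r\sigma$ (with $v=\Im w$, $\sigma=\Im\tau$) and the definitions, yields
$$\partial_{\bar z}F(z)\;=\;-\,\frac{i\,\tau'(z)}{2\,\sigma(z)}\,\overline{F(z)}.$$
Hence $F$ is pseudo-analytic in the sense of Bers--Vekua on $\Delta$, satisfying $\partial_{\bar z}F=B\bar F$ with real-analytic coefficient $B(z)=-i\tau'(z)/(2\sigma(z))$.

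By the classical similarity principle for such equations, any continuous solution on the disk factors as $F=h\cdot e^{\omega}$ with $h$ holomorphic and $\omega$ H\"older continuous; in particular, either $F\equiv 0$ on $\Delta$ or the zero set of $F$ is discrete. If $F\equiv 0$, then since one computes directly that $\partial_z r=-iF/(2\sigma)$, we get $\partial_z r\equiv 0$; as $r$ is real-valued, this forces $r$ to be constant, say $r\equiv r_0$, and then $s:=w-r_0\tau$ is holomorphic and real-valued, hence constant, so $P\subset\LL_{r_0,s_0}$. This contradicts the standing hypothesis that $P$ does not lie in the Betti foliation. Hence $F\not\equiv 0$ and $\{F=0\}$ is discrete, proving the Claim at every $t\in\CC^0$.

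For $t\in\CC\setminus\CC^0$, the function $F$ is still defined on a small punctured classical neighbourhood of $t$ (after choosing a branch of the multivalued $\tau$; the intrinsic object $F$ does not depend on the branch), and the Bers--Vekua argument gives discreteness of $\{F=0\}$ away from $t$. The remaining task, and the main obstacle, is to rule out accumulation of zeros of $F$ at $t$ itself, which is delicate because the coefficient $B$ may blow up at $t$. The plan is to invoke Kodaira's explicit normal forms for the period map near each type of singular fibre (for instance $\tau(z)\sim(n/2\pi i)\log(z-t)$ near an $I_n$ fibre) together with the fact that $P$ extends as a section landing in $\EE^{sm}$ across $t$, to control the asymptotics of $F$ as $z\to t$ and show that $|F(z)|$ remains bounded below.
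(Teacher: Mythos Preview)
Your treatment of points $t\in\CC^0$ is correct and genuinely different from the paper's. Both arguments reduce tangency to the vanishing of a single function: your $F=w'-r\tau'$ vanishes exactly where the paper's real Betti-coordinate map $\phi=(r,s):\Delta\to\R^2$ has vanishing differential (indeed $F=2i\sigma\,\partial_z r$ and $\partial_z s=-\bar\tau\,\partial_z r$, so $F=0\iff d\phi=0$). The paper then invokes the {\L}ojasiewicz gradient inequality to show this zero set is discrete, whereas you observe that $F$ satisfies $\partial_{\bar z}F=B\bar F$ with $B=-i\tau'/(2\sigma)$ real-analytic and apply the Carleman--Bers--Vekua similarity principle. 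Your route is slicker here and makes the holomorphic structure do the work; the paper's route is more elementary but uses a deeper real-analytic fact.

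The genuine gap is at $t\in\CC\setminus\CC^0$. You correctly note that $B$ blows up there, but you do not carry out the analysis; you only state a plan to ``control the asymptotics of $F$'' via Kodaira's normal forms. This is precisely where the substance of the paper's proof lies: near an $I_b$ fibre one has $\tau'(z)\sim c/z$ and $\sigma(z)\sim c'\log|z|$, so $|B|\sim C/(|z|\,|\log|z||)$, which is in $L^2$ but \emph{not} in $L^p$ for any $p>2$; the standard similarity principle therefore does not apply across the puncture, and your pseudo-analytic argument gives no information about accumulation of zeros at $t$. The paper instead works directly in Kodaira's multiplicative model, writes $P$ as a holomorphic $f:\Delta\to\C^\times$, expresses the tangency condition as $f'(z)=\dfrac{f(z)}{z}\cdot\dfrac{\log|f(z)|}{\log|z|}$, and derives a contradiction from a hypothetical sequence $z_i\to0$ by comparing orders of vanishing on both sides. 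The additive and potentially multiplicative cases are then reduced to $I_0$ or $I_b$ by a finite base change. Until you supply an argument of this kind, the Claim is not established at the bad fibres.
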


To establish Claim~\ref{claim}, we will consider cases according to the
reduction type of $\EE$ at $t$.  We use the standard Kodaira notation
($I_n$, $I_n^*$,\dots) to index the cases.

\smallskip\noindent
\textbf{The case of $I_0$ reduction:} Using diagrams~\eqref{eq:triv1}
and \eqref{eq:triv2}, we identify the section $P$ over $U$ with the
graph of a function $\phi:\Delta\to\R^2$.  Write $(r_0,s_0)=\phi(0)$
for the image of $P$ over $t$.  Since $P$ is assumed not to be
contained in $\LL_{r_0,s_0}$, we may shrink $U$ so that $P$ meets
$\LL_{r_0,s_0}$ only over $t$, in other words, so that the only value
of $z$ with $\phi(z)=(r_0,s_0)$ is $z=0$.

It is clear that $P$ is tangent to some $\LL_{r,s}$ over $t'$ if and
only if the derivative of $(x,y)\to(r,s)$ (as a map of 2-manifolds)
vanishes at the $z$ corresponding to $t'$.

To finish, we claim that after possibly
shrinking $U$ and $\Delta$, the derivative of $\phi$ does not
vanish away from $0\in \Delta$.  To see this, apply the Lojasiewicz
gradient inequality (\cite{Lojasiewicz64}, \cite{BierstoneMilman88})
to the components of $(\phi_1,\phi_2)$ of $\phi$: That result says that after
shrinking $\Delta$, there are constants $C>0$ and $0<\theta<1$ such
that
\[\left|\nabla \phi_i(z)\right|\ge C\left|\phi_i(z)-\phi_i(0)\right|^\theta\]
for all $z\in\Delta$.  But if $z\in\Delta\setminus\{0\}$,
$\phi(z)\neq\phi(0)$ so one of the $\phi_i(z)\neq\phi_i(0)$ which
implies that $\nabla \phi_i(z)\neq0$ and so the derivative of $\phi$
is also non-zero.

This establishes Claim~\ref{claim} at points of good reduction: if $t$ is such
a point, there is an open neighborhood $U_t$ of $t$ in $\CC$ such that
$P$ is not tangent to any $\LL_{r,s}$ over any $t'\in U_t\setminus\{t\}$.
To finish the proof, we deal with tangencies near places of
bad reduction.

\smallskip\noindent
\textbf{The case of $I_1$ reduction:}
We next consider the case of multiplicative reduction with an
irreducible special fiber.  I.e., assume that $\EE$ has reduction type
$I_1$ over $t\in\CC$.  Let $\Delta\to\CC$ be a holomorphic
parameterization of a neighborhood of $t$, where $\Delta$ is the unit
disk and $0\in\Delta$ maps to $t$.  Again, over the course of the
proof we will reduce the radius of $\Delta$ but not change the
notation. Let $\XX\to\Delta$ be the pull-back of $\EE\to\CC$ to
$\Delta$, let $\Delta'=\Delta\setminus\{0\}$, and let $\XX'\to\Delta'$
be the restriction of $\XX\to\Delta$ to $\Delta'$.  Note that the
special fiber
\[\XX\setminus\XX' = \text{nodal cubic} \cong \C^\times\cup \{q\}\]
where $q$ is the node of the cubic.  

According to Kodaira \cite[pp.~596ff]{Kodaira63}, we may shrink
$\Delta$ and choose $\Delta\to\CC$ so that $\XX'$ has the form
\[\XX'\cong \left(\Delta'\times\C^\times\right)/\Z\]
where the action of $\Z$ on $\Delta'\times\C^\times$ is
\[m\cdot(z,w)=(z,z^mw).\]

Moreover, as explained starting in the last paragraph of
\cite[p.~597]{Kodaira63}, there is a holomorphic map
\[\phi:\Delta\times\C^\times \to \XX\]
such that $\{0\}\times\C^\times$ maps biholomorphically to the
complement of $q$ in the special fiber, and
$\Delta'\times\C^\times\to\XX'\subset\XX$ is the natural quotient
map.  We may thus identify the section $P$ with a holomorphic map
$f:\Delta\to\C^\times$.  

Now let $V\subset\Delta'$ be a non-empty, connected and simply
connected subset, and choose a branch of the logarithm $\log:V\to\C$.
Then the local Betti leaf $\LL_{r,s}$ over $V$ (computed with respect to the
period $\tau(z)=(1/2\pi i)\log z$) is the image of the map $V\to\XX'$
which sends $z$ to the class of $(z,e^{r\log z}e^{2\pi is})$.  The
local Betti leaf $\LL_{r,s}$ through $(z,w)$ has 
\[r=\frac{\log|w|}{\log |z|}\qquad\text{and}\qquad
    s=\frac{1}{2\pi i}\log\left(\frac{w}{|w|}\right).\] 
  (Note that the logarithms appearing in the expression for $r$
  are evaluated at real numbers, so we use the standard real
  logarithm, and the class of $s$ in $\R/\Z$ is independent of the
  choice of logarithm.)

Then we calculate that $P$ is tangent to $\LL_{r,s}$ at $z\in V$ if
and only if
\begin{equation}\label{eq:deriv}
f'(z)=\left(z\mapsto e^{r\log z}e^{2\pi is}\right)'(z)=
\frac{f(z)}{z}\frac{\log|f(z)|}{\log|z|}.
\end{equation}
Note that the expression on the right is well defined independently of
the choice of $V$ and the logarithm. 

Now we assume that there is a sequence of tangencies accumulating at
$t$ and derive a contradiction.  More precisely, assume that there is
a sequence $z_i\in\Delta'$ tending to 0 such that for all $i$,
\begin{equation}\label{eq:deriv2}
f'(z_i)=\frac{f(z_i)}{z_i}\frac{\log|f(z_i)|}{\log|z_i|}.
\end{equation}
We will show that there is no holomorphic function satisfying such
equalities. 

If $f'$ is identically zero, then $f$ is a non-zero constant.
Equation~\eqref{eq:deriv} shows that the constant value of $f$ must
have absolute value 1, so $f(z)=e^{2\pi is}$ for some real $s$, and we
find that $P$ lies in a local Betti leaf $\LL_{0,s}$, in contradiction
to our assumption.

Now assume that $f'$ is not identically zero, so $f$ takes its value
at $z=0$ to finite order $N=\ord_{z=0}(f(z)-f(0))\ge1$.  Then
$g(z)=zf'(z)/f(z)$ is holomorphic on $\Delta$, and
Equation~\eqref{eq:deriv2} says that
$g(z_i)=(\log|f(z_i)|)/(\log|z_i|)$ for all $i$.  
Shrinking $\Delta$
if necessary, we have estimates
\begin{equation}\label{eq:g-estimate}
B_1|z|^N< |g(z)|<B_2|z|^N
\end{equation}
for some positive constants $B_1$ and $B_2$ and all $z\in\Delta'$.
Shrinking $\Delta$ again if necessary, we may write $f(z)=w_0(1+h(z))$
with 
\[C_1|z|^N<|h(z)|<C_2|z|^N\]
for some positive constants $C_1$ and $C_2$ and all $z\in\Delta'$.  
We have
\[\log|1+h(z)|\le\log(1+|h(z)|)\le|h(z)|< C_2|z|^N\]
for $z\in\Delta'$.
If $|w_0|=1$ (so $\log|w_0|=0$), we have
\[\left|\frac{\log|f(z)|}{\log|z|}\right|=
\left|\frac{\log|1+h(z)|}{\log|z|}\right|\le
\left|\frac{C_2|z|^N}{\log|z|}\right|.\]
Taking $z_i$ close to zero and noting that
$g(z_i)=(\log|f(z_i)|)/(\log|z_i|)$ we get a contradiction to the
lower bound in Equation~\eqref{eq:g-estimate}.

To finish, assume that $|w_0|\neq1$.  Then
\[\left|\frac{\log|f(z)|}{\log|z|}\right|\ge
\left|\frac{\log|w_0|}{\log|z|}\right|-
\left|\frac{\log|1+h(z)|}{\log|z|}\right|\ge
\left|\frac{\log|w_0|}{\log|z|}\right|-
\left|\frac{C_2|z|^N}{\log|z|}\right|.\]
Taking $z_i$ close to zero and noting that
$g(z_i)=(\log|f(z_i)|)/(\log|z_i|)$  we get a contradiction to the
upper bound in Equation~\eqref{eq:g-estimate}.

This establishes that there is no accumulation of
tangencies between $P$ and local Betti leaves at $t$ when $\EE$
has reduction of type $I_1$ at $t$.

\smallskip\noindent
\textbf{The case of $I_b$ reduction:} 
Now consider the case of multiplicative reduction of type
$I_b$ over $t\in\CC$.  This case is very similar to the $I_1$ case,
with some notational complications.

Let $\Delta\to\CC$ be a holomorphic
parameterization of a neighborhood of $t$, where $\Delta$ is the unit
disk and $0\in\Delta$ maps to $t$.  Again, over the course of the
proof we will reduce the radius of $\Delta$ but not change the
notation. Let $\XX\to\Delta$ be the pull-back of $\EE\to\CC$ to
$\Delta$, let $\Delta'=\Delta\setminus\{0\}$, and let $\XX'\to\Delta'$
be the restriction of $\XX\to\Delta$ to $\Delta'$.  Then the
special fiber $\XX\setminus\XX'$ has the form
\[\XX\setminus\XX' = \text{chain of $b$ copies of $\P^1$} 
\cong \bigcup_{i\in\Z/b\Z}\C_i^\times\cup \{q_i\}\]
where the $q_i$ are the nodes of the chain.  Let 
\[\XX^{sm}=\XX\setminus\{q_1,\dots,q_b\}\]
be the smooth locus of $\XX\to\Delta$.

Kodaira \cite[pp.~599ff]{Kodaira63}, gives a covering of $\XX^{sm}$ by
$b$ open sets as follows: for $i\in\Z/b\Z$, let
\[W_i=W_i'\cup\C^\times_i,\qquad W_i'=\left(\Delta'\times\C^\times\right)/\Z\]
where the action of $\Z$ on $\Delta'\times\C^\times$ is
\[m\cdot(z,w)=(z,z^{bm}w).\]  For
$z\in\Delta'$ and $w\in\C^\times$, write $(z,w)_i$ for the
class of $(z,w)$ in $W_i'$.  Then $\XX^{sm}$ is obtained by
glueing the  $W_i$ according to the rule
\[(z,w)_i=(z,z^{j-i}w)_j\]
for all $z\in\Delta'$, $w\in\C^\times$, and $i,j\in\Z/b\Z$.  Thus
$\XX_z$, the fiber of $\XX^{sm}\to\Delta$ over $z\neq0$, is the elliptic
curve $\C^\times/z^{b\Z}$ and the fiber over $z=0$ is a disjoint union
of $b$ copies of $\C^\times$, one appearing in each open set $W_i$.

Now assume that the section $P$ meets the special fiber at
$w_0\in\C^\times_i$.  Then we may choose a small disk $D$ around $w_0$
in $\C^\times_i$ and shrink $\Delta$ so that the image of 
\[\Delta\times D\into W_i\into \XX\]
contains the image of $P$ over $\Delta$.  We may then identify $P$
with a function $f:\Delta\to D$, and the conditions on $f$ for $P$ to
be tangent to a local Betti leaf are the same as they are in
the $I_1$ case.  Thus the rest of the argument is essentially
identical to that in the $I_1$ case, and we will omit the rest of the
details.

\smallskip\noindent
\textbf{The case of $I_b^*$ reduction:} 
Now consider the case where $\EE$ has reduction of type
$I_b^*$ at $t$.  Choose as usual a parameterization $\Delta\to\CC$ of
a neighborhood of $t$ and let $\XX\to\Delta$ be the pull-back of
$\EE\to\CC$.  Let $\tilde\Delta\to\Delta$ be a double cover ramified
at $0\in\tilde\Delta$ and let
$\tilde\Delta'=\tilde\Delta\setminus\{0\}$, so that
$\tilde\Delta'\to\Delta'$ is an unramified double cover.  Then it is
well known that $\tilde\XX'$, the pull-back of $\XX\to\Delta$ to
$\tilde\Delta'$ has an extension to $\tilde\XX\to\tilde\Delta$ whose
fiber over $0$ is of type $I_{2b}$.  Moreover, the section $P$ of
$\XX\to\Delta$ induces a section $\tilde P$ of
$\tilde\XX\to\tilde\Delta$.  We apply the argument of the previous
section to conclude that after shrinking $\tilde\Delta$, there are no
points of $\tilde\Delta'$ over which $\tilde P$ is tangent to a local
Betti leaf.  Since $\tilde\XX'\to\XX'$ is \'etale, the same must be
true after shrinking $\Delta$, i.e., $P$ is not tangent to a local
Betti leaf over $\Delta'$.  (It is clear from the definition that
local Betti leaves are preserved under an \'etale base change.)  This
proves the desired discreteness near a point where $\EE$ has $I_b^*$
reduction.

\smallskip\noindent
\textbf{The cases of $II$, $II^*$, $III$, $III^*$, $IV$, and $IV^*$ reduction:} 
Finally, consider the
cases where $\EE$ has additive and potentially good reduction.  Then
by an argument parallel to that of the previous case, we may focus
attention on a disk $\Delta$ near $t$, pull-back to a ramified cover
$\tilde\Delta\to\Delta$ of order 2, 3, 4, or 6, and reduce to the case
of good reduction.  We leave the details as an exercise for the
reader.

This completes the proof that the set of points $t\in\CC^0$ over which
$P$ is tangent to a local Betti leaf is discrete and therefore
finite, and it concludes the proof of Theorem~\ref{thm:discreteness}. 
\qed

\begin{rem}
  This proof gives no bounds on the cardinality of $T_{tor}$ or
  $T_{Betti}$, i.e., on the number tangencies.  In a sequel to this
  paper, we will give an explicit upper bound on the number of
  tangencies.  In fact, by exploiting the global Betti foliation, we
  will give an \emph{exact formula} for tangencies counted with
  multipicities and taking into account the behavior of $P$ at the bad
  fibers.  The resulting upper bound depends only on topological
  properties of $\EE$.
\end{rem}

\begin{rem}
  A referee points out that case $I_0$ of the argment above also
  follows directly from Lemma~6.4 in \cite{BedfordLyubichSmillie93}.
\end{rem}

\section{Interlude on moduli of elliptic curves with a differential
  and a point}\label{s:moduli}
In this section, we discuss certain moduli spaces of elliptic curves
with additional structure.  These spaces will be useful when we
consider families of elliptic surfaces in the following section.  We
work in more generality than needed in this paper, and readers who are
so inclined may replace the base ring $R$ below with a field $k$ of
characteristic $\neq2,3$ or even with $\C$.

We begin by noting that there is a standard model for an elliptic
curve $E$ equipped with a non-zero differential $\omega$ and a
non-trivial point $P$: Given the data, choose a Weierstrass model of
$E$
\[y^{\prime2}+a'_1x'y'+a'_3y'=x^{\prime3}+a'_2x^{\prime2}+a'_4x'+a'_6\]
such that $\omega=dx'/(2y'+a'_1x'+a'_3)$.  Then there is a unique
change of coordinates $x'=x+r$, $y'=y+sx+t$ such that $P$ has
coordinates $(x,y)=(0,0)$ and $a_1=0$.  Thus there is a unique triple
$(a_2,a_3,a_4)$ such that $E$ is the elliptic curve defined by
\[y^2+a_3y=x^3+a_2x^2+a_4x,\]
the differential is $\omega=dx/(2y+a_3)$, and the point is $P=(0,0)$.

We want to formalize this observation.  Following Deligne
\cite{Deligne75}, we say that a \emph{curve of genus 1} over a base
scheme $S$ is a proper, flat, finitely presented morphism
\[\pi:\WW\to S\]
whose geometric fibers are reduced and irreducible curves of
arithmetic genus 1 equipped with a section $O:S\to\WW$ whose image is
contained in the locus where $\pi$ is smooth.  

Let $R=\Z[1/6]$ and consider the stack $\MM$ over $\spec R$ whose
value on an $R$-scheme $S$ is the set of triples $(\WW\to S,\omega,P)$
where $\WW\to S$ is a curve of genus 1 over $S$ as defined above,
$\omega$ is a nowhere vanishing section of $O^*(\Omega^1_{\WW/S})$,
and $P:S\to\WW$ is a section disjoint from $O$.  Two such triples
$(\WW\to S,\omega,P)$ and $(\WW'\to S,\omega',P')$ are isomorphic if
there exists an $S$-isomorphism $\WW\to\WW'$ carrying $\omega$ to
$\omega'$ and $P$ to $P'$.

\begin{prop}\label{prop:moduli}
  The stack $\MM$ is represented by the affine scheme $\spec
  R[a_2,a_3,a_4]$.  The
universal object over $\MM$ is the projective family of plane cubics
$\WW\to \spec R[a_2,a_3,a_4]$
defined by 
\[y^2+a_3y=x^3+a_2x^2+a_4x\]
 equipped with the
differential $\omega=dx/(2y+a_3)$ and the section $P$ given by $x=y=0$.
The substack of $\MM$ where the curve $\WW\to S$ has smooth fibers is
represented by the open subscheme where $\Delta\neq0$
and the substack where the fibers of $\WW\to S$ are either smooth or
nodal is represented by the open subscheme where either
$\Delta\neq0$ or $2^4a_2^2-2^43a_4\neq 0$. 
\end{prop}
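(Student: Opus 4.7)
The plan is to construct, functorially in the $R$-scheme $S$, a bijection between $\MM(S)$ and $\Hom_R(S, \spec R[a_2, a_3, a_4])$.

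One direction is a routine check: I would verify that the tautological cubic $y^2 + a_3 y = x^3 + a_2 x^2 + a_4 x$ over $\spec R[a_2, a_3, a_4]$ defines an object of $\MM$. It is projective and flat over its base; the point $O = [0:1:0]$ lies in the smooth locus (by inspection of partial derivatives); the differential $\omega = dx/(2y + a_3)$ pulls back to a nowhere-vanishing section of $O^*(\Omega^1_{\WW/S})$; and $P = (0,0)$ is affine, hence disjoint from $O$. The geometric fibers are irreducible and reduced of arithmetic genus $1$: a factorization as $(y - f_1(x))(y - f_2(x))$ would force $f_1 + f_2$ constant and $f_1 f_2$ a nonzero cubic, which is impossible.

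For the inverse direction, given $(\WW \to S, \omega, P) \in \MM(S)$, I would use $\omega$ to construct a canonical Weierstrass embedding $\WW \hookrightarrow \P^2_S$. Following Deligne~\cite{Deligne75}, the pushforwards $\pi_* \OO_\WW(nO)$ are locally free on $S$ of rank $\max(n, 1)$ by relative Riemann--Roch and cohomology-and-base-change, and the trivialization $\omega$ singles out canonical generators of the successive rank jumps. This produces sections $x \in \pi_* \OO_\WW(2O)$ and $y \in \pi_* \OO_\WW(3O)$, each defined up to substitutions $x \mapsto x + r$ and $y \mapsto y + s x + t$ for $r, s, t \in \Gamma(S, \OO_S)$. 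Expressing the seven elements $\{1, x, y, x^2, xy, x^3, y^2\}$ of $\pi_* \OO_\WW(6O)$ as a linear relation yields a Weierstrass cubic $y^2 + a'_1 xy + a'_3 y = x^3 + a'_2 x^2 + a'_4 x + a'_6$. To remove the three-parameter ambiguity I would first impose $a_1 = 0$: under $y \mapsto y + s x$ the coefficient transforms as $a_1 \mapsto a_1 + 2 s$, so $s$ is uniquely determined (using $2 \in R^\times$); then $r = -x_P$ and $t = -y_P$ uniquely move $P$ to $(0, 0)$, which forces $a_6 = 0$. The resulting triple $(a_2, a_3, a_4) \in \Gamma(S, \OO_S)^3$ defines the desired morphism to $\spec R[a_2, a_3, a_4]$.

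For the two substack assertions I would invoke classical Weierstrass discriminant theory: smoothness of a Weierstrass fiber is equivalent to non-vanishing of the discriminant $\Delta$, and a singular Weierstrass fiber is nodal rather than cuspidal iff $c_4 \neq 0$, where in general $c_4 = (a_1^2 + 4 a_2)^2 - 24(a_1 a_3 + 2 a_4)$; specializing to $a_1 = 0$ yields $c_4 = 2^4 a_2^2 - 2^4 \cdot 3 a_4$, matching the proposition. The main obstacle is in the second paragraph: arranging the canonical Weierstrass embedding uniformly over $S$ when the geometric fibers of $\pi$ may degenerate to nodal or cuspidal cubics. This is precisely what Deligne's notion of a curve of genus $1$ is engineered to handle, and his foundational results supply what is needed.
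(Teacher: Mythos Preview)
Your argument is correct, and it is in fact the direct normalization sketched in the motivating paragraph \emph{preceding} the proposition. The paper's formal proof, however, takes a more structural route: rather than redoing the Weierstrass construction with the extra normalization $P=(0,0)$, it quotes Deligne's result that the stack of pairs $(\WW,\omega)$ is already represented by $\spec R[c_4,c_6]$, and then checks by an explicit elimination that the map $\spec R[a_2,a_3,a_4]\to\spec R[c_4,c_6]$ (given by the standard formulas $c_4=16a_2^2-48a_4$, $c_6=288a_2a_4-64a_2^3-216a_3^2$) has as its fibers precisely the \emph{affine} Weierstrass cubics---in other words, $\spec R[a_2,a_3,a_4]$ \emph{is} the universal curve over $\spec R[c_4,c_6]$ minus its zero section. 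A morphism $S\to\spec R[a_2,a_3,a_4]$ is therefore the same as a morphism $S\to\spec R[c_4,c_6]$ (i.e., a pair $(\WW,\omega)$) together with a section of $\WW\to S$ disjoint from $O$. Your approach is more self-contained and avoids that fiber computation; the paper's approach buys a transparent identification of $\MM$ with the punctured universal curve over $\NN$, which makes the geometry of the forgetful map $\MM\to\NN$ explicit.
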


More formally, ``the projective family of plane cubics defined by
$y^2+a_3y=x^3+a_2x^2+a_4x$'' is defined as follows: Let $\RR$ be the
graded $R[a_2,a_3,a_4]$-algebra
\[\RR=R[a_2,a_3,a_4][x,y,z]/(y^2z+a_3yz^2-x^3-a_2x^2z-a_4xz^2)\]
where $x$, $y$, and $z$ have weight 1.  Then 
$\WW=\proj_{\spec R[a_2,a_3,a_4]}(\RR)$.

Here and later in the paper, whenever we have elements $a_2$, $a_3$,
$a_4$ in some ring, we set
\begin{align*}
  c_4(a_2,a_3,a_4)&=16a_2^2-48a_4\\
&=2^4a_2^2-2^43a_4\\
c_6(a_2,a_3,a_4)&=288a_2a_4-64a_2^3-216a_3^2\\
&=2^53^2a_2a_4-2^6a_2^3-2^33^3a_3^2\\
\Delta(a_2,a_3,a_4)
&=-16a_2^3a_3^2+16a_2^2a_4^2+72a_2a_3^2a_4-27a_3^4-64a_4^3\\
&=-2^4a_2^3a_3^2+2^4a_2^2a_4^2+2^33^2a_2a_3^2a_4-3^3a_3^4-2^6a_4^3.
\end{align*}
We often omit the $a_i$ and simply write $c_4$, $c_6$, or $\Delta$.
However, in the proof just below, we do not omit the $a_i$, i.e., we
distinguish between the elements $c_4$, $c_6$ generating a two-variable
polynomial ring $R[c_4,c_6]$ and the elements $c_4(a_2,a_3,a_4)$ and
$c_6(a_2,a_4,a_6)$ in the ring $R[a_2,a_3,a_4]$.

\begin{proof}[Proof of Proposition~\ref{prop:moduli}]
By \cite[Prop.~2.5]{Deligne75}, the stack of pairs $(\WW\to S,\omega)$
as above is represented by the affine scheme $\spec R[c_4,c_6]$ with
universal curve
\[y^{\prime2}=x^{\prime3}-\frac{c_4}{2^43}x'-\frac{c_6}{2^53^3}\] 
and universal differential $dx'/2y'$.  (Deligne uses the more
traditional coordinates $g_2=c_4/(2^23)$ and $g_3=c_6/(2^33^3)$, but
this is immaterial since $1/6\in R$.)  Define a morphism
$\spec R[a_2,a_3,a_4]\to\spec R[c_4,c_6]$ by sending
\[c_4\mapsto c_4(a_2,a_3,a_4)\quad\text{and}\quad 
c_6\mapsto c_6(a_2,a_3,a_4).\]
Then pulling back the universal curve over $\spec R[c_4,c_6]$ to $\spec
R[a_2,a_3,a_4]$ and making the change of coordinates $x'=x+a_2/3$,
$y'=y+a_3/2$ yields the curve and differential mentioned
in the statement of the theorem. 

To finish the proof, one checks that the fibers of $\spec
R[a_2,a_3,a_4]\to\spec R[c_4,c_6]$ are the \emph{affine} plane curves
\[y^{\prime2}=x^{\prime3}-\frac{c_4}{2^43}x'-\frac{c_6}{2^53^3},\]
i.e., $\spec R[a_2,a_3,a_4]$ is the universal curve over $\spec
R[c_4,c_6]$ minus its zero section.
Indeed, the fiber over $(c_4,c_6)$ is
\begin{align*}
c_4&=16a_2^2-48a_4\\
c_6&=288a_2a_4-64a_2^3-216a_3^2. 
\end{align*}
Eliminating $a_4$ and dividing by $2^53^3$, we find
\[\frac{a_3^2}{2^2}=\frac{a_2^3}{3^3}-\frac{c_4a_2}{2^43^2}-\frac{c_6}{2^53^3}.\]
Thus setting $a_3=2y'$ and $a_2=3x'$ yields the stated fiber.

This means that to give a morphism to $\spec R[a_2,a_3,a_4]$ is to
give a morphism to $\spec R[c_4,c_6]$ (i.e., a family of curves and a
differential) together with a non-zero point in each fiber.  This
completes the proof that $\spec R[a_2,a_3,a_4]$ represents $\MM$.

The assertions about the locus where $\WW$ has good or nodal fibers
follows from \cite[Prop.~5.1]{Deligne75}, and this completes the proof
of the proposition.
\end{proof}

In light of the proposition, from now we change notation and let $\MM$
be defined as the scheme $\spec R[a_2,a_3,a_4]$.  Also, we write
$\MM^{sm}$ for the locus where $\Delta\neq0$ and $\MM^{n}$ for the
locus where $\Delta=0$ and $c_4\neq0$.  Similarly, let $\NN=\spec
R[c_4,c_6]$, $\NN^{sm}$ the locus where $c_4^3-c_6^2\neq0$, and
$\NN^{n}$ the locus where $c_4^3-c_6^2=0$ and $c_4\neq0$.

\subsection{Torsion}\label{ss:torsion}
Let $\pi:\WW\to\MM$ be the universal curve.  Then the smooth locus of
$\pi$ is a commutative group scheme over $\MM$ and we may speak of
points of finite order in the fibers.  For each $n>1$, let $\MM[n]$ be
the locus where $P$ has order dividing $n$, let $\MM[n]'$ be the
locus where $P$ has order exactly $n$, and let
$\MM^{sm}[n]=\MM^{sm}\cap\MM[n]$ and
$\MM^{sm}[n]'=\MM^{sm}\cap\MM[n]'$.

Let $n>1$ and let $k$ be a field of characteristic zero or prime to $6n$.  For
$R$-schemes, write $-\tensor k$ for the base change along the unique
morphism $\spec k\to\spec R$.  Then it follows from \cite[I.6 and
II.1.18-20]{DeligneRapoport73} that $\MM[n]\tensor k$ is locally
closed in $\MM\tensor k$, everywhere regular and of codimension 1, and that
$\MM^{sm}[n]\tensor k$ is a divisor in $\MM^{sm}$ which is
\'etale and finite of degree $n^2$ over $\NN^{sm}$.

In fact, there are explicit recursive equations for divisors
$\DD_n\subset\MM$ such that $\MM^{sm}[n]=\MM^{sm}\cap \DD_n$, namely
the ``division polynomials'' evaluated at $P$
\cite[Ex.~3.7]{SilvermanAEC}.  More precisely, for each $n>1$, there
is a homogenous polynomial $\psi_n$ in $a_2,a_3,a_4$ (where $a_i$ has
weight $i$) of degree $n^2-1$ such that $\DD_n$ is defined by $\psi_n$.
We have
\begin{align*}
  \psi_2&=a_3,\\
\psi_3&=a_2a_3^2-a_4^2,\\
\psi_4&=2a_2a_3^3a_4-2a_3a_4^3-a_3^5,
\end{align*}
and the higher $\psi_n$ are defined recursively by
\begin{align*}
  \psi_{2m+1}&=\psi_{m+2}\psi_m^3
-\psi_{m-1}\psi_{m+1}^3\quad&m\ge2,\\
\psi_2\psi_{2m}&=\psi_{m-1}^2\psi_m\psi_{m+2}
-\psi_{m-2}\psi_m\psi_{m+1}^2\quad&m\ge3.
\end{align*}

\subsection{Nodal cubics with a point}
Let $k$ be a field of characteristic zero or $p>3$, and let
$a=(a_2,a_3,a_4)$ be a $k$-valued point of $\MM^n$, i.e., such that
$\Delta(a)=0$ and $c_4(a)\neq0$.  Then by
Proposition~\ref{prop:moduli}, the plane cubic
\[E_a:\qquad y^2+a_3y=x^3+a_2x^2+a_4x\] 
over $k$ is nodal.  We further assume that $(a_3,a_4)\neq(0,0)$ so that
$P=(0,0)$ and the node, call it $Q$, are distinct.  Let $\G_m$ be the
multiplicative group over $k$.  Then, possibly after extending $k$
quadratically, there is a group isomorphism
\[ E_a\setminus\{Q\}\to\G_m\]
which is unique up to pre-composing with inversion.  We want to write
down an explicit expression for the image of $P$ under such an
isomorphism.

This is a straightforward calculation:  The node is defined by
the vanishing of $2y+a_3$ and $3x^2+2a_2x+a_4$, and one finds that its
coordinates are
\[ Q=\left(\frac{18a_3^2-8a_2a_4}{c_4},\frac{-a_3}{2}\right)\]
where as usual $c_4=16a_2^2-48a_4$.
Changing coordinates 
\[ x=x'+\frac{18a_3^2-8a_2a_4}{c_4},\qquad
y=y'+\frac{-a_3}{2}\]
brings $E_a$ into the form
\[y^{\prime 2}=x^{\prime3}
+\frac{-c_6}{4c_4}x^{\prime2}\]
where as usual $c_6=288a_2a_4-64a_2^3-216a_3^2$.
Letting $\gamma$  be a square root of $-c_6/(4c_4)$, the map to $\G_m$
is 
\[(x',y')\mapsto\frac{y'-\gamma x'}{y'+\gamma x'}\]
and we find that $P$ maps to
\begin{equation}
  \label{eq:P-coord}
\frac{a_3c_4-\gamma(16a_2a_4-36a_3^2)}{a_3c_4+\gamma(16a_2a_4-36a_3^2)}  
\end{equation}
which (not surprisingly) is an algebraic expression in the original
$a_2,a_3,a_4$.

\section{From $E/K$ to $\EE\to\CC$}\label{s:constructing-EE}
We remind the reader how to go from an elliptic curve over a function
field to an elliptic surface.  Although this is not strictly necessary
for our main purposes, it suggests a fruitful point of view on
finite-dimensional families of elliptic surfaces parameterized by
certain Riemann-Roch spaces.

\subsection{General construction}\label{ss:construction}
Let $k$ be a field of characteristic 0 or $p>3$, let $\CC$ be a
smooth, projective, absolutely irreducible curve over $k$, and let
$K=k(\CC)$.  Let $E$ be an elliptic curve over $K$ equipped with a
non-zero rational point $P\in E(K)$.

Choose a non-zero differential $\omega$ on $E$.  Then by
Proposition~\ref{prop:moduli}, there is a unique triple
$a=(a_2,a_3,a_4)$ of elements of $K$ such that $E$ is isomorphic to
\[y^2+a_3y=x^3+a_2x^2+a_4x,\] 
$P$ is $(0,0)$ and $\omega=dx/(2y+a_3)$.  Let $D$ be the smallest
divisor on $\CC$ such that $\dvsr(a_i)+iD$ is effective for $i=2,3,4$.
(Here ``smallest'' is with respect to the usual partial ordering:
$D_1\ge D_2$ if $D_1-D_2$ is effective.)  Let $L=\O_\CC(D)$ so that
we may regard $a_i$ as a global section of $L^{\tensor i}$.

If $U\subset\CC$ is a non-empty Zariski open subset and $\phi$ is a
trivialization of $L$ over $U$ (i.e., a nowhere vanishing section of
$L$), then over $U$ we may regard the $a_i$ as functions, and we
get a morphism $U\to\MM$.  Pulling back the universal curve gives a
family
\[\WW_U\to U\]
of curves of genus 1 (in the sense used before
Proposition~\ref{prop:moduli}) with a section $P_U$ disjoint from $O$,
and the general fiber of $\WW_U\to U$ is $E/K$ equipped with $P$.  
If $\{U_j\}$ is an open cover with
trivializations $\phi_j$ of $L_{|U_j}$, there is a unique way to glue
over the intersections compatible with the identification of the
generic fiber of $\WW_{U_j}\to U_j$ with $E/K$, and the result is a
global family $\WW\to\CC$ of curves of genus 1
equipped with a section which we again denote by $P$.
Writing $\PP$ for the $\P^2$ bundle over $\CC$ given by
\[\PP=\P_{\CC}\left(L^2\oplus L^3\oplus\O_\CC\right)\]
(with coordinates $[x,y,z]$ on the fibers), we see that $\WW$ is the
closed subset of $\PP$ defined by the equation 
\[y^2+a_3y=x^3+a_2x^2+a_4x\]
and $P$ is the section $[0,0,1]$.

The surface $\WW$ may have isolated singularities, and if so, we
resolve them and then blow down any remaining $(-1)$-curves in the
fibers of the map to $\CC$, thus obtaining a smooth, relatively
minimal elliptic surface $\EE\to\CC$ with a section again denoted by
$P$.

\subsection{A geometric subtlety}
There is a subtle point hiding in the last step of this construction:
The section $P$ of $\WW\to\CC$ is disjoint from $O$,
yet a section of $\EE\to\CC$ may very well meet $O$.  Therefore, there
may be some blowing down in the last step to force such an intersection.
We make a few more comments about this situation and then give an
example.

The underlying issue is that the local models $\WW_U\to U$ are in a
sense minimal with respect to pairs ``elliptic fibration + nowhere zero
section,'' but they may not be minimal if we forget the section.  We
can quantify this as follows:  Given $E/K$ and $P$, choosing $\omega$
leads to coefficients $a_i\in K$ and to invariants
\[c_4=2^4(a_2^2-3a_4)\quad\text{and}\quad 
c_6=2^53^2a_2a_4-2^6a_2^3.\]
Recall that $D$ was defined as the smallest divisor on $\CC$ such that
$\dvsr(a_i)+iD\ge0$ for $i=2,3,4$.  Similarly, let $D'$ be the
smallest divisor on $\CC$ such that $\dvsr(c_j)+jD'\ge0$ for
$j=2,4$.  Then it is clear that $D\ge D'$ and the points entering into
$D-D'$ are exactly those where the model $\WW\to\CC$ is not minimal (in
the sense of \cite[p.~816]{SilvermanAEC}).  Moreover, while $\WW$
sits naturally as a divisor in
\[\PP=\P_{\CC}\left(L^2\oplus L^3\oplus\O_\CC\right),\]
the minimal Weierstrass family associated to $\WW\to\CC$ is naturally
a divisor in
\[\PP'=\P_{\CC}\left(L^{\prime2}\oplus L^{\prime3}\oplus\O_\CC\right)\]
where $L'=\O_\CC(D')$.  The choice of $\omega$ defines (possibly
rational) sections of $L$ and $L'$ with divisors $D$ and $D'$
respectively.  Since $O^*(\Omega^1_{\EE/\CC})=L'$, in some
sense  $L'$ is more natural than $L$.

\subsection{An example}
Let $\CC=\P^1$ and $K=k(t)$, and let $E/K$ be defined by 
\[w^2=z^3+t^2z-1\]
with point $P=(t^{-2},t^{-3})$ and differential $\omega=dz/2w$.  The
standard model coming from Proposition~\ref{prop:moduli} for this data is
\[y^2+2t^{-3}y=x^3+3t^{-2}x^2+(3t^{-4}+t^2)x\]
with $P=(0,0)$ and $\omega=dx/(2y+2t^{-3})$.  Also, $c_4=-48t^2$ and
$c_6=864$ and we find that 
\[D=0+\infty\quad\text{and}\quad D'=\infty.\]
The local model $\WW_{\A^1}\to\A^1$ is given by
\[y^2+2y=x^3+3x^2+(3+t^6)x.\]
The fiber over $t=0$ is a cubic with cusp at $t=0$, $x=y=-1$, and the
surface $\WW_{\A^1}$ is singular at this point.  Resolving
the singularity requires blowing up once and normalizing, and a
further blow down removes a $(-1)$-curve in the fiber.  This last blow
down brings the section $P$ into contact with the zero section $O$. 

\subsection{Starting with the line bundle}\label{ss:L-to-EE}
We take the following point of view on constructing elliptic surfaces
over $\CC$:  Start with a line bundle $L$ on $\CC$.  Then for each 
\[a=(a_2,a_3,a_4)\in H^0(\CC,L^2\oplus L^3\oplus L^4)\]
with $\Delta(a_2,a_3,a_4)\neq0$,
we get $\WW\to\CC$ defined by the vanishing of
\[y^2z+a_3yz^2=x^3+a_2x^2z+a_4xz^2\]
in 
\[\PP=\P_{\CC}\left(L^2\oplus L^3\oplus\O_\CC\right).\]
For ``most'' choices of $a$, $\WW\to\CC$ is already minimal and
$L'=L$.  This holds if $\Delta(a_2,a_3,a_4)$ has order of vanishing
$<12$ (as a section of $L^{12}$) at each place of $\CC$.  If $\Delta$
has only simple zeroes, then $\WW\to\CC$ is minimal and $\WW$ is
regular, so $\EE=\WW$.  In this way, we get flat families of elliptic
surfaces parameterized by open subsets of certain Riemann-Roch spaces.
We will justify the claim ``most'' in the next section.


\section{Very general elliptic surfaces with two
  sections}\label{s:v-g-surfaces}
In this section, $k$ is a field of characteristic zero or $p>3$ and
$\CC$ is a smooth, projective, absolutely irreducible curve over $k$.
Let $L$ be a line bundle on $\CC$ which is globally generated and
write $d$ for the degree of $L$.

Let $a=(a_2,a_3,a_4)$ be an element of
$V=H^0(\CC,L^2\oplus L^3\oplus L^4)$ with $\Delta(a)\neq0$.  Then as
explained in Section~\ref{ss:L-to-EE} we get a family $\WW_a\to\CC$ of
curves of genus 1 and a relatively minimal elliptic surface
$\EE_a\to\CC$ equipped with a section $P$.  Our aim is to show that for
a very general choice of $a$, $P$ is transverse to $\EE_a[n]$ for all
$n$ and enjoys other desirable properties.

We first consider the case where $d=0$, so $L$ is trivial and the
$a_i$ are constants.  In this case, it is clear that $P$ is transverse
to all torsion sections if and only if it is disjoint from all torsion
sections, if and only if it is of infinite order. This happens for
very general choices of $a$, but not on a Zariski open.  That suggests
what to expect in the general case.

We restate Theorem~\ref{thm:vg} (in the case where $L$ is non-trivial)
with an additional claim:
\begin{thm}\label{thm:vg+}
  Let $L$ be a globally generated line bundle on $\CC$ of degree $d>0$,
  and set 
\[V=H^0(L^2\oplus L^3\oplus L^4).\]
Then for a very general $a=(a_2,a_3,a_4)\in V$, the elliptic surface
$\EE_a\to\CC$ associated to
\[E:\qquad y^2+a_3y=x^3+a_2x^2+a_4x\]
equipped with the section $P=(0,0)$ has the following properties:
\begin{enumerate}
\item $P$ has infinite order
\item The singular fibers of $\EE_a\to\CC$ are nodal cubics \textup{(}i.e.,
  Kodaira type $I_1$\textup{)}.
\item $P$ meets each singular fiber in a non-torsion point.
\item If $n$ is not a multiple of the characteristic of $k$, then $P$
  is transverse to $\EE_a[n]$.
\item If $n$ is not a multiple of the characteristic of $k$, then $nP$
  meets $O$ transversally in $d(n^2-1)$ points.
\end{enumerate}
\end{thm}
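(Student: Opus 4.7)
The plan is to translate each of (1)--(5) into an algebraic condition on $a\in V$ cut out by the nonvanishing of explicit discriminants and resultants, show that the ``good'' locus for each such condition is a nonempty Zariski open subset of $V$, and then take a countable intersection to obtain the very general $a$ satisfying all of (1)--(5) simultaneously. The central tool is the division polynomials $\psi_n\in\Z[a_2,a_3,a_4]$ of Section~\ref{ss:torsion}, which are weighted homogeneous of weight $n^2-1$, so $\psi_n(a)$ is naturally a global section of $L^{n^2-1}$. When (2) holds, the fibers of $\EE_a\to\CC$ are irreducible (nodal), $O^*(\Omega^1_{\EE_a/\CC})\cong L$, and $P$ is disjoint from $O$; Lemma~\ref{lemma:int-numbers} then yields $ht(P)=2d$ and $(nP).O=d(n^2-1)$. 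Since $\psi_n$ cuts out $\MM[n]$ in $\MM$, pulling back along the classifying map $\CC\to\MM$ determined by $a$ identifies
\[ D_n \;=\; \dvsr_{\CC}\bigl(\psi_n(a)\bigr), \]
both of degree $d(n^2-1)$. Under this dictionary, (1) becomes $\psi_n(a)\not\equiv 0$ for every $n\ge 2$; (2) becomes $\Delta(a)$ square-free with $c_4(a)$ nonvanishing on its zero set; (3) becomes $\gcd(\psi_n(a),\Delta(a))=1$ for each $n\ge 2$; (4) becomes $\psi_n(a)$ square-free for each $n\ge 2$; and (5) will follow automatically from (2), (4), and the intersection formula above.

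Each such condition, for fixed $n$, is the nonvanishing of a single polynomial in the coordinates of $a$, hence Zariski open in $V$. The essential step is nonemptyness; I will focus on (4), the hardest case. Consider the incidence variety
\[ I_n \;:=\; \bigl\{\,(a,t)\in V\times\CC \,:\, \ord_t\psi_n(a)\ge 2\,\bigr\}. \]
Fixing $t\in\CC$, the fiber of $I_n$ over $t$ is cut out on $V$ by the two conditions $\psi_n(a)(t)=0$ and $d_t\psi_n(a)=0$. Using the positivity of $L$ and the explicit recursion for $\psi_n$ from Section~\ref{ss:torsion}, I will argue these two conditions are independent on $V$, so each fiber has codimension at least $2$ in $V$. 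Varying $t$ over the one-dimensional $\CC$ then gives $\dim I_n\le\dim V-1$, so its projection to $V$ is a proper Zariski closed subset, and the complement is the desired nonempty Zariski open. Analogous (easier) arguments handle (1), (2), and (3), after which a very general $a\in V$ lies in the countable intersection of all these open conditions and satisfies (1)--(4); (5) is then automatic from the intersection formula and transversality.

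The main obstacle is justifying the independence claim above: that at each fixed $t$, the two equations $\psi_n(a)(t)=0$ and $d_t\psi_n(a)=0$ really cut out codimension $2$ in $V$, rather than collapsing to a single hypersurface. This rests on two ingredients: the smoothness of the torsion multisection $\MM[n]'\subset\MM$ recorded in Section~\ref{ss:torsion}, which forces $\nabla_a\psi_n$ to be nonzero on $\MM[n]'$; and sufficient positivity of $L$ so that the jet-evaluation map $V\to J^1(L^{n^2-1})_t$ is surjective, transferring this independence from $\MM$ back to $V$. A subsidiary technical point is carefully verifying the multiplicity equality in $D_n=\dvsr_{\CC}(\psi_n(a))$, which reduces to the transversal Cartier structure of $\MM[n]\subset\MM$ supplied by Section~\ref{ss:torsion}.
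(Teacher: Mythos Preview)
Your strategy is essentially the paper's own, recast in the language of division polynomials and incidence varieties rather than the moduli map $F:V\times U\to\MM_k$ used in Proposition~\ref{prop:torsion}. The paper pulls back the smooth divisor $\MM_k[n]$ along the smooth surjection $F$ and then exhibits a point where $D_n\to V_\Delta$ is \'etale; your incidence variety $I_n$ is exactly the locus where that projection fails to be \'etale, so the two arguments are equivalent. Two points in your sketch need correction, however.

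First, the jet map you want is $V\to J^1(L^2\oplus L^3\oplus L^4)_t$, not $V\to J^1(L^{n^2-1})_t$. The map $a\mapsto\psi_n(a)$ is polynomial, so ``surjectivity'' of the composite onto $J^1(L^{n^2-1})_t$ is neither well-posed nor what you need. What you actually require is that, having fixed $a(t)=m\in\MM_k[n]'$, you can still vary $a'(t)$ freely enough to make $\nabla\psi_n(m)\cdot a'(t)\neq0$; this is a statement about first jets of $a$, not of $\psi_n(a)$.

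Second, and more substantively, global generation of $L$ does \emph{not} give surjectivity of $V\to J^1(L^2\oplus L^3\oplus L^4)_t$ at every $t$; it only gives surjectivity of the $0$-jet map. There need not exist sections of $L^i$ vanishing simply at an arbitrary $t$. The paper confronts this directly: it observes that such sections exist for all but finitely many $t$, and then uses the connectedness of the fibers of $F$ (they are open subsets of affine spaces over $U$) to move any point of $D_n$ to one lying over such a good $t'$ before adjusting the derivative. Your dimension count for $I_n$ can be salvaged without this maneuver---over the finitely many bad $t$ the fiber still has codimension at least $1$, since $\psi_n$ is not identically zero---but you should say so rather than claim codimension $2$ uniformly.

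A minor third point: the implication ``$\MM_k[n]'$ smooth $\Rightarrow$ $\nabla\psi_n\neq0$ on $\MM_k[n]'$'' requires that $\psi_n$ cut out $\MM_k[n]$ with its reduced structure, i.e., that $\psi_n$ is square-free. This is true and classical, but is an extra fact beyond what Section~\ref{ss:torsion} states; the paper's moduli-theoretic phrasing avoids needing it.
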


Here, as usual, ``for a very general $a$'' means that there is a
countable union of non-empty, Zariski open subsets of $V$ such that if
$a$ lies in their intersection, then the assertion holds for $a$.  We
will prove several lemmas, each asserting that some Zariski open
subset is non-empty, and then put them together to prove the theorem
at the end of this section.  It is no loss of generality to assume that
$k$ is algebraically closed, so for convenience we assume this for the
rest of the section.

Recall that ``$L$ is globally generated'' means that for all
$t\in \CC$, there is a global section of $L$ not vanishing at $t$.  It
is a standard exercise to show that $L$ is globally generated and has
positive degree if and only if there is a non-constant morphism
$f:\CC\to\P^1$ such that $L=f^*\O_{\P^1}(1)$.  Moreover, if $L$ is
globally generated, then the set of global sections of $L$ with
reduced divisors (i.e., distinct zeroes) is non-empty and Zariski
open.

\begin{lemma}\label{lemma:V-Delta}
  The subset $V_\Delta\subset V$ consisting of $a$ such that
  $\Delta(a)$ has $12d$ distinct zeroes \textup{(}as a section of
  $L^{12}$\textup{)} is Zariski open and not empty.  There are
  $a\in V_\Delta$ whose zeroes are disjoint from any given finite
  subset of points of $\CC$.
\end{lemma}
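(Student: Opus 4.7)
The plan is to prove openness of $V_\Delta$ by a standard incidence-variety argument and to establish non-emptiness (with avoidance of any given finite set $S$) by pulling back a suitable example from $\P^1$ along the morphism defined by $L$.

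First, I will show openness by forming the closed subscheme
\[
W = \{(a,t) \in V \times \CC : \mathrm{mult}_t \Delta(a) \geq 2\} \subset V \times \CC,
\]
cut out locally by the vanishing of $\Delta(a)$ and of its first jet at $t$ in a local trivialization of $L^{12}$. Since $\CC$ is proper over $k$, the composition $W \hookrightarrow V \times \CC \to V$ is closed, and its image is exactly $V \setminus V_\Delta$, because a nonzero section of $L^{12}$ has $12d$ distinct zeros if and only if it has only simple zeros.

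Next, for non-emptiness and the avoidance claim, I will exploit that $L$ globally generated of degree $d>0$ yields $L \cong f^*\O_{\P^1}(1)$ for a finite morphism $f:\CC \to \P^1$ of degree $d$, with finite branch locus $B \subset \P^1$. Pullback by $f$ gives an injection $V_0 := H^0(\P^1,\,\O(2) \oplus \O(3) \oplus \O(4)) \hookrightarrow V$ compatible with discriminants, so if I can choose $a^0 \in V_0$ with $\Delta(a^0) \in H^0(\P^1,\O(12))$ having $12$ simple zeros disjoint from $S' := f(S) \cup B$, then $a := f^*a^0 \in V$ will have $\Delta(a) = f^*\Delta(a^0)$ with $12d$ simple zeros disjoint from $S$ (each zero of $\Delta(a^0)$ contributes $d$ unramified preimages since it avoids $B$). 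Avoidance of $S'$ is a non-empty open condition on $V_0$: for each $z \in S'$, the triple $(0,0,s)$ with $s \in H^0(\O(4))$ nonvanishing at $z$ gives $\Delta(0,0,s)(z) = -64\,s(z)^3 \neq 0$, so $\{a^0 \in V_0 : \Delta(a^0)(z) = 0\}$ is a proper closed subset of $V_0$, and the finite union of these over $z \in S'$ is still proper.

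The main obstacle is exhibiting a single $a^0 \in V_0$ whose discriminant $\Delta(a^0)$ is a degree-$12$ polynomial in the affine coordinate with $12$ simple roots, equivalently showing that the "discriminant of $\Delta(a^0)$" as a polynomial on $V_0$ is not identically zero. This is the classical fact that the generic Weierstrass fibration on $\P^1$, even in our marked-point form $y^2 + a_3 y = x^3 + a_2 x^2 + a_4 x$, has only $I_1$ singular fibers; it can be verified by an explicit (if tedious) computation with one judicious choice of coefficients, or by a Bertini-style argument applied to the pencil obtained by varying $a_4^0$ with $(a_2^0,a_3^0)$ fixed and generic (so that $\Delta$ becomes a cubic in $a_4^0$ whose discriminant is nonconstant in $t$). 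Once this one example is in hand, intersecting the open condition "simple zeros" from the first step (applied to the finite-dimensional space $V_0$) with the finitely many hyperplane-avoidances from the second step produces the required non-empty Zariski open subset of admissible $a^0$, completing the proof.
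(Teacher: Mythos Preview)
Your strategy is exactly the paper's: reduce to $\CC=\P^1$ with $L=\O_{\P^1}(1)$ via a morphism $f:\CC\to\P^1$ with $L=f^*\O_{\P^1}(1)$, exhibit an $a^0$ on $\P^1$ whose discriminant has simple zeroes avoiding $f(S)\cup B$, and pull back. Your openness argument (via the incidence locus of double zeroes) is more explicit than the paper's, which just calls openness ``clear'', and your avoidance argument via the open conditions $\Delta(a^0)(z)\neq0$ for $z\in S'$ is fine.

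The one genuine gap is the step you yourself flag as the ``main obstacle'': you never actually produce an $a^0\in V_0$ with $\Delta(a^0)$ squarefree. You defer this to an unspecified explicit computation or to a ``Bertini-style'' sketch that is not really an argument as written (viewing $\Delta$ as a cubic in $a_4$ and looking at its discriminant in $a_4$ does not directly control the squarefreeness of $\Delta$ as a degree-$12$ polynomial in $t$). The paper dispatches this in one line: take $a_2=0$, $a_3=c\in k^\times$ constant, $a_4=t^4$; then
\[
\Delta=-27c^4-64t^{12},
\]
which visibly has $12$ distinct zeroes (the $12$th roots of $-27c^4/64$, distinct since $\mathrm{char}\,k\nmid 12$), and varying $c$ moves these zeroes off any prescribed finite set. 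Inserting this example into your framework completes your proof.
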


\begin{proof}
  It is clear that the locus of $a\in V$ where $\Delta(a)$ has
  distinct zeroes is Zariski open.
  To prove the lemma, we need to
  check that $V_\Delta$ is not empty.  We do this constructively.
  First assume $\CC=\P^1$ and $L=\O_{\P^1}(1)$.  Set $a_2=0$,
  $a_3=c\in k$, and $a_4=t^4$.  Then $\Delta=-27c^4-64t^{12}$ which
  has distinct zeroes as a section of $\O_{\P^1}(12)$ if $c\neq0$.
  Moreover, varying $c$, we can arrange for the zeros to avoid any
  finite subset of $\P^1$.

In the general case, choose a morphism $f:\CC\to\P^1$ such that
$L=f^*(\O_{\P^1}(1))$.  Let $S\subset\P^1$ be the branch locus of $f$.
Then setting $a_2=0$, $a_3=c$, and $a_4=f^*(t^4)$, where $c$ is chosen
so that the zeroes of $-27c^4-64t^{12}$ are disjoint from $S$, yields
an explicit $a$ with the required properties.  Varying $c$ allows us to
avoid any finite subset of $\CC$.
\end{proof}

As noted in Section~\ref{ss:L-to-EE}, if $a\in V_\Delta$, then the
corresponding elliptic surface $\WW_a$ is smooth (so no resolution
of singularities is needed), $\WW_a\to\CC$ is relatively minimal (so we
may set $\EE_a=\WW_a$), and $L=O^*(\Omega^1_{\EE_a/\CC})$.  Moreover, the
bad fibers of $\EE_a\to\CC$ are all of type $I_1$.  From now on we
always choose $a$ from $V_\Delta$.

\begin{lemma}\label{lemma:sing-fibers}
  For every $n\ge1$, there is a non-empty, Zariski open subset $V_n$ of
  $V_\Delta$ such that if $a\in V_n$, then the section $P$ of
  $\EE_a\to\CC$ does not intersect any singular fiber in a point of
  order exactly $n$.
\end{lemma}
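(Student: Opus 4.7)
The plan is to realize the bad locus inside $V_\Delta$ as the image of a closed subvariety of $V_\Delta\times\CC$ under projection to the first factor, and then show by a dimension count that this image is a proper closed subset. Properness of $\CC$ makes the image Zariski-closed, so its complement will be the desired Zariski-open $V_n\subset V_\Delta$.

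Concretely, I would set
\[Z_n:=\bigl\{(a,t)\in V_\Delta\times\CC : \Delta(a)(t)=0 \text{ and } \psi_n(a)(t)=0\bigr\},\]
where $\psi_n$ is the $n$-th division polynomial evaluated at $P=(0,0)$ from Section~\ref{ss:torsion} (with the convention $\psi_1:=1$, so $Z_1=\emptyset$). Since $\psi_n$ is homogeneous of weighted degree $n^2-1$ and $\Delta$ of weighted degree $12$, the evaluations $\psi_n(a)$ and $\Delta(a)$ are well-defined global sections of $L^{n^2-1}$ and $L^{12}$ respectively, so $Z_n$ is Zariski-closed in $V_\Delta\times\CC$. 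Let $\bar Z_n$ be its closed image in $V_\Delta$ and set $V_n:=V_\Delta\setminus\bar Z_n$. For $a\in V_n$, no $t\in\CC$ satisfies both $\Delta(a)(t)=0$ and $\psi_n(a)(t)=0$, so $P$ does not have order dividing $n$ at any singular fiber, hence \emph{a fortiori} not order exactly $n$; this gives the conclusion of the lemma, provided $V_n$ is non-empty.

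To verify non-emptiness, fix $t\in\CC$ and count dimensions. Since $L$ is globally generated, the evaluation map $\mathrm{ev}_t:V\twoheadrightarrow L^2|_t\oplus L^3|_t\oplus L^4|_t$ is a surjective linear map, and after trivializing $L$ at $t$ its target is identified with the affine space $\MM_k=\spec k[a_2,a_3,a_4]$ of Section~\ref{s:moduli}. The fiber $(Z_n)_t\subset V_\Delta$ is then the $\mathrm{ev}_t$-preimage of $\{\Delta=0\}\cap\{\psi_n=0\}\subset\MM_k$, so the two subvarieties have equal codimension. Each of $\{\Delta=0\}$ and $\{\psi_n=0\}$ is a hypersurface in the three-dimensional $\MM_k$, and they intersect properly because $\psi_n$ does not vanish identically on $\{\Delta=0\}$: starting from any smooth elliptic curve (e.g.\ an ordinary one when $p\mid n$) carrying a non-identity point of order dividing $n$ and applying the change of variables from Section~\ref{s:moduli} produces a point of $\MM_k$ with $\psi_n=0$ and $\Delta\neq 0$. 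Hence $(Z_n)_t$ has codimension $2$ in $V_\Delta$, and letting $t$ range over the $1$-dimensional $\CC$ gives $\dim Z_n\leq\dim V_\Delta-1$, so $\bar Z_n\subsetneq V_\Delta$ and $V_n$ is non-empty.

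The one point requiring care is that $\psi_n(a)(t)=0$ really captures the condition ``$P$ has order dividing $n$ in the smooth group scheme of the fiber $\EE_a|_t$'' at nodal fibers as well as at smooth ones; this follows because the division polynomials arise from universal formulas on $\WW^{sm}\to\MM$ expressing multiplication-by-$n$, and by Section~\ref{ss:mult-by-n} this group law extends across nodal fibers. Modulo this verification, the dimension count above is the main work.
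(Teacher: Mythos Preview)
Your approach via a dimension count is genuinely different from the paper's, which is constructive: the paper reduces to $\CC=\P^1$, $L=\O_{\P^1}(1)$ by pullback, writes down a one-parameter family $(a_2,a_3,a_4)=(0,c,t^4)$, and uses the explicit $\G_m$-coordinate of $P$ (formula~\eqref{eq:P-coord}) at each bad fiber to see that only finitely many $c$ make it an $n$-th root of unity. Your incidence-variety argument is more conceptual and avoids explicit computation; the paper's argument avoids the need to analyze how $\psi_n$ behaves on the nodal locus.

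There is, however, a genuine logical slip in your justification of the key step. You claim that $\psi_n$ does not vanish identically on $\{\Delta=0\}$, and then support this by exhibiting a point with $\psi_n=0$ and $\Delta\neq0$. That is the wrong direction: it shows $\{\psi_n=0\}\not\subset\{\Delta=0\}$, whereas what you need is $\{\Delta=0\}\not\subset\{\psi_n=0\}$, i.e., a point with $\Delta=0$ and $\psi_n\neq0$. The fix is easy and in fact already implicit in your final paragraph: take any nodal cubic (so $\Delta=0$, $c_4\neq0$) whose point $P$ has order in $\G_m$ not dividing $n$---for instance, $P$ mapping under \eqref{eq:P-coord} to a primitive $\ell$-th root of unity for a prime $\ell\nmid n$ and prime to $p$. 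Then $nP\neq O$ in the fiber, and by the very extension of the division-polynomial formulas to the smooth locus of nodal fibers that you invoke at the end, $\psi_n(a)\neq0$ there. With this correction your codimension-2 claim stands and the rest of the argument goes through.
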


\begin{proof}
  It is clear that the locus of $a$ where $P$ has the stated property
  is open, and our task is to show it is non-empty.  Since the bad
  fibers are all of type $I_1$, if $k$ has characteristic $p>0$ and
  $n$ is divisible by $p$, there are no points of order exactly $n$ in
  the fiber, so we may take $V_n=V_\Delta$.  

  Now assume that $n$ is not divisible by the characteristic of $k$.
  We check constructively that there is a non-empty set as described
  in the statement.  As in the previous lemma, we may reduce to the
  case $\CC=\P^1$ and $L=\O_{\P^1}(1)$.  Take $a_2=0$, $a_3=c$,
  $a_4=t^4$.  Then the bad fibers are at the roots of
  $t^{12}=(-27/64)c^4$ and at each such root, the coordinate in $\G_m$
  of $P$ was given at \eqref{eq:P-coord}.  For the data we are
  considering, the coordinate is
\[\frac{4t^4-3c\gamma}{4t^4+3c\gamma}\qquad
  \text{where $\gamma=(-9c^3/2)^{1/2}t^{-2}$}.\] 
Then for each $n$, there are only finitely many values of $c$ such
that for some root $t$ of $t^{12}=(-27/64)c^4$, the displayed quantity
is an $n$-th root of unity.  This proves that $V_n$ is non-empty for
each $n$.
\end{proof}

\begin{rem}
  Over an uncountable field, intersecting the opens in the theorem
  gives a non-empty set.  We can do a bit better over $\C$: There is
  an everywhere dense classical open set in $V_\Delta$ such that $P$
  meets each singular fiber away from the unit circle
  $S^1\subset\C^\times$.
\end{rem}

\begin{lemma}\label{lemma:no-p-torsion}
  If the characteristic of $k$ is $p>3$, then for all $a\in V_\Delta$
  and any $n$ divisible by $p$, $P$ does not have order exactly $n$.
\end{lemma}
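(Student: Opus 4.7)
The plan is to argue by contradiction using the height pairing from Section~\ref{ss:heights}. The key observation is that for $a \in V_\Delta$ all singular fibers of $\EE_a \to \CC$ are of Kodaira type $I_1$, hence irreducible, so every section automatically passes through the unique component of each fiber, and the correction divisor $C_Q$ in the height formula vanishes for every section $Q$. I will show that these hypotheses force any nontrivial torsion section to yield a negative intersection number with $O$, which is absurd. The restriction $p \mid n$ in the statement is in fact not essential to the argument: the same reasoning excludes torsion sections of any order, provided $d > 0$ (as will be the case throughout this section).

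Concretely, I would assume for contradiction that $P$ has order exactly $n$ with $p \mid n$, and set $Q := (n/p)P$, which is a nontrivial section of order exactly $p$. Using $C_Q = 0$ and the identity $Q^2 = O^2 = -d$ supplied by the canonical bundle formula (cf.\ the proof of Lemma~\ref{lemma:int-numbers}, which applies because $a \in V_\Delta$ forces $\WW_a = \EE_a$ to be a minimal regular model with $O^*(\Omega^1_{\EE_a/\CC}) = L$), the height formula \eqref{eq:ht-def} expands to
\[
\langle Q, Q \rangle = -(Q - O + C_Q) \cdot (Q - O) = -(Q-O)^2 = 2\,Q \cdot O + 2d.
\]
Because the height pairing is positive definite modulo torsion, as noted in Section~\ref{ss:heights} (valid whenever $\EE_a \to \CC$ is non-constant, i.e., whenever $d > 0$), and $Q$ is torsion, the left-hand side vanishes. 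This forces $Q \cdot O = -d < 0$, contradicting the non-negativity of the intersection of two distinct irreducible curves on a smooth surface.

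The main obstacle is really bookkeeping rather than a deep difficulty: one must verify that the ingredients from Section~\ref{ss:heights}, particularly the positive-definiteness of $\langle -, - \rangle$ modulo torsion, hold unconditionally in positive characteristic. These are classical results and pose no genuine obstruction. It is worth noting a purely characteristic-$p$ incarnation of the same contradiction that explains the phrasing of the lemma: the smooth locus of any $I_1$ fiber is $\G_m$, and $\mu_p(k)$ is trivial in characteristic $p$, so a section of order exactly $p$ must meet $O$ at every one of the $12d$ singular fibers. Hence $Q \cdot O \geq 12d > 0$, which already conflicts with the height calculation $Q \cdot O = -d$ without invoking the abstract positivity statement. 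This more geometric formulation is what makes the lemma characteristic-$p$ in flavor, even though the underlying argument works in any characteristic.
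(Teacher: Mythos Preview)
Your argument is correct, and in fact proves more than the lemma asserts: once $a\in V_\Delta$ forces all fibers to be irreducible, the height identity $\langle Q,Q\rangle = 2\,Q\cdot O + 2d$ together with $\langle Q,Q\rangle=0$ for torsion $Q$ rules out \emph{any} nontrivial torsion section when $d>0$, not just $p$-power torsion. The only point to state a bit more carefully is why $\langle Q,Q\rangle=0$: this follows from bilinearity of the Shioda pairing (so $p^2\langle Q,Q\rangle=\langle pQ,pQ\rangle=0$), which is logically prior to the positivity statement you cite.

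The paper takes a different route, specific to characteristic $p$: it observes that for $a\in V_\Delta$ the zeros of $c_4$ are disjoint from those of $\Delta$, so $j=c_4^3/\Delta$ has only simple poles; hence $j$ is non-constant and cannot be a $p$-th power in $k(\CC)$, and a cited criterion (Prop.~I.7.3 of \cite{Ulmer11}) then shows $\EE_a$ has no nontrivial $p$-torsion sections. Your approach is more self-contained and yields the stronger conclusion, at the cost of relying on the bilinearity of the height pairing; the paper's approach explains the characteristic-$p$ flavor of the lemma and ties it to the explicit shape of the $j$-map, but needs an external reference. Your secondary observation---that a $p$-torsion section must hit $O$ at every $I_1$ fiber because $\mu_p(k)=\{1\}$---is a nice direct witness of the contradiction, and is closer in spirit to the paper's reasoning, though the paper does not make it explicit.
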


\begin{proof}
  It will suffice to show that when $a\in V_\Delta$, $\EE_a\to\CC$ has
  no non-trivial $p$-torsion sections.  First note that since $a\in
  V_\Delta$, the zeroes of $c_4$ are disjoint from those of $\Delta$.
  This implies that $j=c_4^3/\Delta$ has simple poles, so it is not a
  constant (implying that $\EE_a\to\CC$ is non-isotrivial) and not a
  $p$-th power.  Then \cite[Prop~I.7.3]{Ulmer11} implies that
  $\EE_a\to\CC$ has no $p$-torsion.  (In \cite{Ulmer11}, the ground field
  is finite, but the argument there works over any field of
  positive characteristic.)
\end{proof}

\begin{prop}\label{prop:torsion}
  For every $n$ not divisible by the characteristic of $k$, there is a
  non-empty, Zariski open subset $W_n\subset V_\Delta$ such that if
  $a\in W_n$, then $nP\neq0$ and $P$ is transverse to $\EE_a[n]$. 
\end{prop}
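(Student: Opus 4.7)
Both conditions on $a$ are Zariski open in $V_\Delta$. The condition $nP\neq O$ says the section $\psi_n(a)\in H^0(\CC,L^{n^2-1})$ obtained by evaluating the $n$th division polynomial on the $a_i$ is nonzero, which is the complement in $V_\Delta$ of a closed condition on the coefficients of $a$. The condition that $P$ is transverse to $\EE_a[n]$ says $\psi_n(a)$ has only simple zeros, again Zariski open.

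\textbf{Non-emptiness via the moduli.} Using the disjoint decomposition $\EE_a[n]=\bigsqcup_{m\mid n,\,m>1}\EE_a[m]'$ on the smooth locus, transversality of $P$ to $\EE_a[n]$ is equivalent to transversality of $P$ to each $\EE_a[m]'$, and since a finite intersection of nonempty Zariski opens is nonempty it suffices to fix one such $m$ and produce a nonempty Zariski open on which $P$ is transverse to $\EE_a[m]'$. The plan is to use the universal moduli morphism $\phi\colon V_\Delta\times\CC\to\MM$ sending $(a,t)$ to $(a_2(t),a_3(t),a_4(t))$ (defined locally via trivializations of $L$). For each fixed $t$ its $V_\Delta$-direction differential is the evaluation map $V=\bigoplus_i H^0(L^i)\to\bigoplus_i L^i|_t$, which is surjective by global generation of each $L^i$; hence $\phi$ is smooth on the preimage of $\MM^{sm}$. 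By Section~\ref{ss:torsion}, $\MM^{sm}[m]'\subset\MM^{sm}$ is a smooth divisor, \'etale over $\NN^{sm}$ (using that $m$ is coprime to the characteristic). Its pullback $Z_m':=\phi^{-1}(\MM^{sm}[m]')$ is therefore a smooth divisor in the preimage of $\MM^{sm}$, and transversality of $P$ to $\EE_a[m]'$ at $t$ translates to the fiber of the projection $p_m\colon Z_m'\to V_\Delta$ over $a$ being reduced at $(a,t)$.

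\textbf{Dominance and generic smoothness.} The projection $p_m$ is dominant: $\MM^{sm}[m]'$ has a nonzero weighted-homogeneous defining polynomial in $a_2,a_3,a_4$, and by global generation of each $L^i$ one can find $a_0\in V_\Delta$ whose pullback under $\phi$ yields a nonzero section of a positive-degree line bundle on $\CC$, which therefore has zeros, giving a nonempty fiber of $p_m$ over $a_0$. Generic smoothness of the dominant morphism $p_m$ of smooth varieties then produces a nonempty Zariski open of $V_\Delta$ on which the fiber of $p_m$ is reduced, i.e., on which $P$ meets $\EE_a[m]'$ transversely. Intersecting these opens over the finitely many $m\mid n$, $m>1$, together with the nonempty open $\{a:\psi_n(a)\not\equiv 0\}$, defines $W_n$.

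\textbf{Main obstacle.} The delicate point is generic smoothness of $p_m$ in characteristic $p>3$ coprime to $m$: in characteristic zero it is automatic for a dominant morphism of smooth varieties, but in positive characteristic one must verify generic separability. This is precisely where the hypothesis that $m$ is coprime to the characteristic is essential, as it forces $\MM^{sm}[m]'\to\NN^{sm}$ to be \'etale; pulling back an \'etale slicing coordinate for $\MM^{sm}[m]'$ along the smooth morphism $\phi$ yields a local defining equation for $Z_m'$ whose $\CC$-direction derivative is nonzero at generic points of $Z_m'$, establishing generic \'etaleness of $p_m$ as required.
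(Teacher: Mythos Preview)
Your overall strategy matches the paper's: both use the universal map to the moduli space $\MM$, establish its smoothness via global generation, pull back the smooth torsion divisor, and then analyze the projection to $V_\Delta$. In characteristic zero your appeal to generic smoothness is a legitimate shortcut relative to the paper's more hands-on argument. (One cosmetic difference: the paper works with $\MM_k[n]$ directly rather than decomposing into exact-order pieces, and it makes the local nature of $\phi$ explicit by working over opens $U_j\subset\CC$ with chosen trivializations and intersecting the resulting $W_{U_j,n}$.)

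The genuine gap is in positive characteristic. Your final paragraph claims that \'etaleness of $\MM^{sm}[m]'\to\NN^{sm}$ produces a local defining equation for $Z_m'$ whose $\CC$-direction derivative is generically nonzero, but this does not follow. \'Etaleness over $\NN^{sm}$ tells you only that $T_{\MM^{sm}[m]'}$ is transverse to the \emph{fiber} direction of $\MM^{sm}\to\NN^{sm}$; it says nothing about how the one-dimensional image $d\phi(T_{\{a\}\times U})$ sits relative to $T_{\MM^{sm}[m]'}$, and that is exactly what controls whether $p_m$ is \'etale at $(a,t)$. In characteristic $p$ a dominant morphism of smooth varieties of the same dimension can be inseparable on an entire component, and nothing in your argument rules this out for $p_m$.

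The paper closes this gap by an explicit construction that works uniformly in all characteristics and does \emph{not} invoke \'etaleness over $\NN$. Given any point $(a,t)$ in a component $D_{n,i}$, with image $m=F(a,t)$, one first uses irreducibility of the fiber $F^{-1}(m)$ (an $\A^{h-3}$-bundle over $U$) to slide to a point $(a',t')$ where each $L^i$ admits a section $s_i$ vanishing to order exactly $1$ at $t'$. Replacing $a'$ by $a''=a'+(\alpha_2 s_2,\alpha_3 s_3,\alpha_4 s_4)$ keeps $F(a'',t')=m$ but shifts the $U$-direction image $dF_{(a'',t')}(\partial_t)$ by $(\alpha_2 s_2'(t'),\alpha_3 s_3'(t'),\alpha_4 s_4'(t'))$, which ranges over all of $T_{\MM,m}$ as the $\alpha_i$ vary. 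One can therefore choose $a''$ so that this vector lies outside $T_{\MM[n],m}$, exhibiting a point where $p_m$ is \'etale. The engine here is the global generation of $L$ supplying the sections $s_i$, not any \'etaleness property of the torsion locus.
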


\begin{proof}
  Again, it is clear that the set of $a$ with the desired properties is
  open.  Unfortunately, it seems hopeless to give a constructive proof
  that it is non-empty, so we have to do something more sophisticated.

Recall the moduli space $\MM$ of Section~\ref{s:moduli}.  We write
$\MM_k$ for 
\[\MM\tensor_{\Z[1/6]}k=\spec k[a_2,a_3,a_4]\]
and $\MM_k[n]$ for the locally closed, smooth,
codimension 1 locus parameterizing triples $(E,\omega,P)$ where $P$
has order $n$.

Recall also that $V=H^0(\CC,L^2\oplus L^3\oplus L^4)$ and $V_\Delta$
is the open subset consisting of $a$ such that $\Delta(a)$ has
distinct zeroes.  Choose an open subset $U\subset\CC$ and a
trivialization of $L$ over $U$.  Then for $a=(a_2,a_3,a_4)$ the $a_i$
may be regarded as functions on $U$, and we get a morphism
$f_a:U\to\MM_k$.  To say that $nP=O$ is to say that $f_a(U)$ is
contained in $\MM_k[n]$.  To say that $P$ is tangent to $\EE_a[n]$
over $x\in U$ is to say that $f_a(U)$ is tangent to $\MM_k[n]$ at
$f_a(x)$.  We will show that these conditions do not hold for most
$a$.

Consider the morphism
\[F: V\times U\to\MM_k\qquad (a,t)\mapsto F(a,t):=f_a(t)\] 
and let 
\[D_n:=F^{-1}(\MM_k[n])\cap\left(V_\Delta\times U\right).\] 
We will use the global generation of $L$ to show that $D_n$ is a
smooth, locally closed subset of codimension 1 in $V_\Delta\times U$,
and that there is a non-empty open subset $W_{U,n}\subset V_\Delta$
such that the projection $D_n\to V_\Delta$ is \'etale over $W_{U,n}$.
This means that if $a\in W_{U,n}$, then $\{a\}\times U$ is transverse
to $D_n$, i.e., that $P$ meets the $n$-torsion multisection of $\EE_a$
transversally over $U$.  Taking a finite cover $\{U_j\}$ of $\CC$ and
setting $W_n=\cap_jW_{U_j,n}$ will complete the proof.


Since $L$ is globally generated, so are its powers $L^i$ for
$i=2,3,4$.  This means that for every $t\in U$, there are global
sections $a_2,a_3,a_4$ not vanishing at $t$, and for all but finitely
many $t$ there are global sections $s_2,s_3,s_4$ which vanish to order
1 at $t$.  (Since $L^i$ is globally generated, there are sections of
$L^i$ inducing a morphism $\CC\to\P^1$.  If $t$ is not in the
ramification locus, a section $s_i$ as above can be obtained by
pulling back a section of $\OO_{\P^1}(1)$ vanishing simply at the
point of $\P^1$ under $t$.)

For each $t\in U$, the restriction
\[F_t:V\times\{t\}\to\MM_k\] 
is a linear map, and since $L$ is globally generated, it is
surjective.  Thus the fibers are all affine spaces of dimension $h-3$
where $h=\dim V$.  Therefore, $F$ is surjective and smooth (smooth
because it is submersive, i.e., it has a surjective differential at
every point).  Moreover, the fibers of $F$ are $\A^{h-3}$-bundles over
$U$, and in particular, they are all irreducible of dimension
$h-2$.  It follows that each irreducible component $D_{n,i}$ of $D_n$
is smooth and locally closed in $V_\Delta\times U$ of codimension 1
and has the form 
\[D_{n,i}=F^{-1}(\MM_k[n]_i)\cap\left(V_\Delta\times U\right)\] 
where $\MM_k[n]_i$ is an irreducible component of $\MM_k[n]$.

Consider an irreducible component $D_{n,i}$ of $D_n$.  We are going to
produce a point of $D_{n,i}$ at which the projection $D_{n,i}\to V$ is
\'etale.  Start by choosing any point $(a,t)\in D_{n,i}$ and let
$m=F(a,t)$.  The fiber of $F$ over $m$ is an $\A^{h-3}$ bundle over
$U$, and $F^{-1}(m)\cap(V_\Delta\times U)$ is a non-empty open subset
of this bundle, so it projects to a non-empty open subset of $U$.
This means that we may find another point
$(a',t')=(a'_2,a'_3,a'_4,t')$ in $D_{n,i}$ such that $L^i$ admits
global sections $s_i$ vanishing simply at $t'$ for $i=2,3,4$.

For all triples $(\alpha_2,\alpha_3,\alpha_4)\in k^3$, we have 
\[F(a'_2+\alpha_2s_2,a'_3+\alpha_3s_3,a'_4+\alpha_4s_4,t)=m,\] 
and for a non-empty open subset of triples $(\alpha_i)\in k^3$, we
have that \[a'':=(a'_2+\alpha_2s_2,a'_3+\alpha_3s_3,a'_4+\alpha_4s_4)\in
V_\Delta.\]  
By a suitable choice of the $\alpha_i$
we may arrange for the differential of $F$ restricted to
$\{a''\}\times U$ to carry the tangent space of $U$ at $t$ to a line in
the tangent space of $\MM_k$ at $m$ not contained in $T_{\MM[n],m}$.
For such a choice, we conclude that $\{a''\}\times U$ is transverse to
$D_{n,i}$ at $(a'',t)$.  This proves that the projection
$D_{n,i}\to V_\Delta$ is \'etale at $(a'',t)$.

It follows that there is a Zariski open subset $D_{n,i}^o$ of
$D_{n,i}$ such that $D_{n,i}^o\to V_\Delta$ is \'etale.  The image of
$D_{n,i}\setminus D_{n,i}^o$ in $V_\Delta$ is contained in a proper
closed subset, and removing these subsets for all $i$ yields an open
subset $W_{U,n}$ over which $D_n\to V_\Delta$ is \'etale.  Covering
$\CC$ with finitely many $U_j$ and setting $W_n=\cap_jW_{U_j,n}$
yields an open subset of $V_\Delta$ such that if $a\in W_n$, then $P$
does not have order $n$ and is transverse to $\EE_a[n]$.  This
completes the proof of the proposition.
\end{proof}

\begin{proof}[Proof of Theorem~\ref{thm:vg+}]
Consider the intersection 
\[V'=\left(\bigcap_{n\ge1} V_n\right)\bigcap 
\left(\bigcap_{p\,\nodiv\,n}W_n\right)\subset V_\Delta.\] 
The preceding lemmas show that if $a\in V'$, then the corresponding
$\EE_a$ has the properties asserted in the Theorem.  Indeed, since
$a\in V_\Delta$, $\Delta(a)$ as $12d$ distinct zeroes, and so $\EE_a$
has $12d$ bad fibers of type $I_1$ and no other bad fibers.  Since
$a\in\cap_n V_n$, Lemma~\ref{lemma:sing-fibers} shows that $P$ does
not meet a bad fiber in a torsion point.  Since $a\in\cap_n W_n$,
Lemma~\ref{lemma:no-p-torsion} and Proposition~\ref{prop:torsion} show
that $P$ has infinite order, and Proposition~\ref{prop:torsion} shows
that if $n$ is prime to the characteristic, then $P$ is transverse to
$\EE_a[n]$.  This establishes points (1) through (4) of the Theorem.

The transversality in point (5) is equivalent to that in (4), so to
finish we just need to calculate the intersection multiplicity $(nP).O$.
For this, we first note that $P.O=0$ by construction, and as explained
in the proof of Lemma~\ref{lemma:int-numbers}, $O^2=P^2=-d$.  Thus
$ht(P)=2d$ and Lemma~\ref{lemma:int-numbers} implies that
$(nP).O=d(n^2-1)$, as required.

This completes the proof of the theorem.
\end{proof}

\begin{rem}
  When $\CC=\P^1$, every line bundle of non-negative degree is
  globally generated.  Thus, starting from data $(E,P)$ over $\P^1$,
  we can find a deformation $(\EE',P')$ with the same base $\CC$ and
  bundle $L$ such that $P'$ is transverse to all torsion
  multisections.  For a general $\CC$, if we do not assume any
  positivity for $L=O^*(\Omega^1_{\EE/\CC})$, it may be impossible to
  produce deformations with fixed $\CC$ and $L$. Here are two
  alternatives: First, we may embed $L\into L'$ where $L'$ is globally
  generated, and deform a non-minimal model of $\EE$ (lying in
  $\P_\CC(L^{\prime2}\oplus L^{\prime 3}\oplus\OO_\CC)$).  Second, it
  seems likely that the ideas of Moishezon \cite{Moishezon77}, as
  explained in \cite[Thm.~I.4.8]{FriedmanMorganSFMCS} would allow one
  to find a deformation of $\EE$ where the base curve is also allowed
  to vary (i.e., deform to $\EE'\to\CC'$ and section $P'$) with the
  desired transversality.
\end{rem}

\begin{rem}
  Suppose that $k$ has characteristic zero and that $\pi:\EE\to\CC$
  and $P$ satisfy the conclusions of Theorem~\ref{thm:vg+}.  If $n_1$
  and $n_2$ are two distinct integers, then $n_1P\cup n_2P$ is a
  normal crossings divisor on $\EE$.  More generally, if $N\subset\Z$
  is a non-empty finite set, then
  \[D=\bigcup_{n\in N}nP\] 
  is a curve on $\EE$ with only ordinary multiple points.  Indeed, it
  is a union of smooth components which meet pairwise transversally.
This is clear from the facts that $O$ and $nP$ meet transversally for
all $n\neq0$ and that $O\cup(n_2-n_1)P$ is
carried isomorphically to $n_1P\cup n_2P$ under translation by $n_1P$.
\end{rem}

\section{Explicit examples with even height over small
  fields}\label{s:explicit} 
In this section, we show by explicit construction that there are pairs
$(\EE,P)$ with $P$ transverse to torsion multisections over fields $k$
such as number fields and global function fields.  The precise
statement is Theorem~\ref{thm:explicit} in the introduction.  For
simplicity, we assume throughout that the characteristic of $k$ is not
2.  We begin by constructing examples of height 2 over $\P^1$.

\begin{prop}\label{prop:explicit}
  Let $k$ be a field of characteristic $\neq2$.  Then there exist
  Jacobian elliptic surfaces $\EE\to\P^1$ over $k$ equipped with a
  section $P$ such that 
\begin{enumerate}
\item $P$ has infinite order.
\item The singular fibers of $\EE\to\P^1$ are of Kodaira type $I_0^*$.
\item $P$ meets each singular fiber in a non-torsion point.
\item If $n$ is not a multiple of the characteristic of $k$, then $nP$
  meets $O$ transversally in 
 \[\begin{cases}
\displaystyle\frac{n^2-1}2&\text{if $n$ is odd,}\\
\displaystyle\frac{n^2-4}2&\text{if $n$ is even,}
\end{cases}\]
points.
\item The height of $\EE$ is 2, i.e., $O^*(\Omega^1_{\EE/\P^1})\cong\OO_{\P^1}(2)$.
\end{enumerate}
\end{prop}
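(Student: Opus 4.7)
I propose an explicit construction. Choose any $(A, B) \in k^2$ with $4A^3 + 27B^2 \ne 0$, so that $E_0 : Y^2 = X^3 + AX + B$ is an elliptic curve over $k$. Set $D(t) := t^3 + At + B \in k[t]$ and define
\[
\EE : \qquad y^2 = x^3 + A\, D(t)^2\, x + B\, D(t)^3,
\]
an elliptic fibration over $\P^1$ with zero section $O$ at infinity, together with the section
\[
P := \bigl(t\, D(t),\, D(t)^2\bigr).
\]
A direct substitution verifies $P \in \EE(k(t))$, using $D^4 = D^3 \cdot (t^3 + At + B)$. The construction is motivated by the observation that $\EE$ is the quadratic twist of $E_0 \times \P^1$ by $D$, and that the double cover $C : u^2 = D(t)$ trivializing this twist is itself isomorphic to $E_0$ via $(t, u) \leftrightarrow (X, Y)$, so that $P$ corresponds under this trivialization to the identity morphism $\phi \colon C \to E_0$.

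\textbf{Items (2), (5), (1).} The Weierstrass discriminant factors as $\Delta_\EE = -16\, D(t)^6 (4A^3 + 27 B^2)$. Since $\operatorname{disc}(D) = -(4A^3 + 27 B^2) \ne 0$, the cubic $D$ has three simple finite roots; viewed as a section of $\OO_{\P^1}(4)$ it has a further simple zero at $\infty$. At each of the four zeros one checks $v(A D^2) \ge 2$, $v(B D^3) \ge 3$, $v(\Delta_\EE) = 6$, so Tate's algorithm identifies the fiber as Kodaira type $I_0^*$; summing Euler numbers gives $\chi(\EE) = 24/12 = 2$, hence $O^*(\Omega^1_{\EE/\P^1}) \cong \OO_{\P^1}(2)$. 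Infinite order of $P$ follows from the identification $\phi = \mathrm{id}_C$, which is a non-constant morphism $C \to E_0$.

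\textbf{Items (3), (4).} At $\infty$, the local chart $(x', y') = (x s^4,\, y s^6)$ with $s = 1/t$ puts $P$ at the smooth point $(1,1)$ of the cubic $y'^2 = x'^3$, so $P$ meets the $I_0^*$ fiber at $\infty$ on its identity component and away from the origin. At each finite zero $t_0$ of $D$, $P$ passes through the Weierstrass cusp $(0,0)$, so after resolution $P$ lands on a non-identity component of the corresponding $I_0^*$; I certify that $P$ meets this component at a non-torsion point by checking, for each nonzero $T \in E_0[2]$ (viewed as a $2$-torsion section of $\EE$), that $P + T$ does \emph{not} reduce to $O$ at $t_0$, a direct group-law computation using $D(t_0) = 0$. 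With $d := \deg L = 2$ and $P \cdot O = 0$, the Shioda height correction (cf.~\S\ref{ss:heights}) is $C_P \cdot P = 3$, one per finite $I_0^*$, giving $\operatorname{ht}(P) = 2d - C_P \cdot P = 1$. Since the component group of each $I_0^*$ is $2$-torsion, $nP$ meets the same components as $P$ for $n$ odd and the identity component of every $I_0^*$ for $n$ even, so $C_{nP} \cdot nP = 3$ or $0$ respectively. Applying the height formula to $nP$ yields $(nP) \cdot O = (n^2 - 1)/2$ or $(n^2 - 4)/2$ as stated. Transversality is confirmed by pulling back to $C$: there $nP$ becomes the isogeny $[n] \colon C \to E_0$ whose intersection with the zero section of $E_0 \times C$ is the étale subscheme $C[n]$.

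\textbf{Main obstacle.} The delicate point is identifying precisely which component of each $I_0^*$ fiber is met by $P$: the odd/even dichotomy in the intersection count depends critically on $P$ meeting the identity component at $\infty$ but non-identity components at the three finite zeros of $D$. The translation-by-$T$ trick lets us settle this without running the full Tate resolution at each cusp.
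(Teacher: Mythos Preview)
Your construction is exactly the paper's: $\EE$ is the Kummer surface $(E_0\times E_0)/(\pm1)$ fibered over $E_0/(\pm1)\cong\P^1$, presented as the quadratic twist of the constant surface $E_0\times\P^1$ by $D$, and your section $P$ is the image of the diagonal (graph of the identity). The verifications differ only cosmetically---the paper checks item~(3) at the finite bad fibers by computing $2P$ explicitly rather than via your $P+T$ translations, and it establishes the transversality in~(4) by analyzing the Kummer quotient map directly, which is your pullback-to-$C$ argument in other language (you should make explicit that $C\to\P^1$ is unramified at $C[n]\setminus C[2]$, so that the \'etaleness of $C[n]$ descends to $\EE$); the paper then confirms the intersection counts via heights in a remark, exactly as you do in the body of your argument.
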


\begin{proof}
  We will construct one such $\EE\to\P^1$ for every elliptic curve $E$
  over $k$.  Suppose that $f\in k[x]$ is a monic polynomial of degree
  3 such that $E$ is defined by $y^2=f(x)$.  Form the product
  $E\times_k E$, and let $\{\pm1\}\subset\aut(E)$ act diagonally.
  The quotient $(E\times_k E)/(\pm1)$ is a singular (Kummer) surface,
  and projection to the first factor induces a morphism 
\[(E\times_k E)/(\pm1)\to E/(\pm1)\cong\P^1.\] 

Let $\EE\to\P^1$ be
  the regular minimal model of $(E\times_k E)/(\pm1)\to\P^1$.  Thus
  $\EE$ is obtained from $(E\times_k E)/(\pm1)$ by blowing up the 16
  fixed points of $\pm1$ on $E\times_k E$, and the bad fibers of
  $\EE\to\P^1$ are of type $I_0^*$ and lie over $t=\infty$ and the
  roots of $f(t)$.  

Let $\Gamma_n\subset E\times_k E$ be the graph of multiplication by
$n$, which we may regard as the image of a section to
$E\times_k E\to E$.  Then $\Gamma_n$ is preserved by $\pm1$ and maps
with degree 2 to a section of $(E\times_k E)/(\pm1)\to\P^1$, and this
section lifts to a section $nP$ of $\EE\to\P^1$.  (The notation is
consistent in that $nP$ is $n$ times $P$ in the group law of $\EE$.)

The following diagram summarizes the data:
\[
  \xymatrix{
    E\times_k E\ar[r]\ar[d]&(E\times_k E)/(\pm1)\ar[d]&\EE\ar[l]\ar[d]\\
E\ar@/^1pc/^{\Gamma_n}[u]\ar[r]
&E/(\pm1)\cong\P^1\ar@{=}[r]&\P^1.\ar@/_1pc/_{nP}[u]}
\]

It will be convenient to have a Weierstrass equation for $\EE\to\P^1$.  If
$f(x)=x^3+ax^2+bx+c$, then $\EE$ is the N\'eron model of the elliptic
curve 
\[ y^2=x^3+af(t)x^2+bf^2(t)x+cf^3(t)\]
over $k(t)$, and the point $P$ has coordinates $(x,y)=(tf(t),f^2(t))$.
Indeed, if the two factors of $E\times E$ are $v^2=f(u)$ and
$s^2=f(r)$, then the field of invariants of $\pm1$ is generated by
$u$, $r$, and $z=vs$, and these satisfy the equation 
\[z^2=f(u)f(r).\]
Setting $u=t$ and $z=y/f(u)$, and $r=x/f(u)$ yields the equation and
point above.

We now verify the cases $n=1$ and $n=2$ of the proposition.  Since $P$
has polynomial coefficients, it does not meet $O$ over any finite
value of $t$, and since its $x$ and $y$ coordinates have degrees 4 and
6, and $\EE$ has height 2, $P$ also does not meet $O$ over $t=\infty$.
In summary, $P$ meets $O$ nowhere, as claimed.  For later use, we
note that at the roots of $f$, $P$ specializes to $(0,0)$, i.e., to
a singular point of the fiber of $(E\times_k E)/(\pm1)$, so $P$
lands on a non-identity component of the fiber of $\EE$.  At
$t=\infty$, $P$ specializes to $(1,1)$, a non-singular, finite point of
the fiber (i.e., a point not on $O$).

A tedious but straightforward calculation (or an algebra package \dots)
shows that $2P$ has coordinates $((1/4)t^4+\cdots,(1/8)t^6+\cdots)$
where $\cdots$ indicates terms of lower degree in $t$.  The argument of
the previous paragraph shows that $2P$ meets $O$ nowhere, as
claimed.  For later use, we note that $2P$ passes through a
finite point of the identity component in each of the bad fibers.

Now consider $n>2$.  It is clear that $\Gamma_n$ meets $E\times\{0\}$
exactly at the points of $E$ of order $n$, and each of these
intersections is transverse.  If $(p,0)$ is such a point which is not
of order 2, then the quotient map
\[E\times_k E\to(E\times_k E)/(\pm1)\] 
is \'etale in a neighborhood of $(p,0)$ and it sends $\Gamma_n$ 2-to-1
to a curve that meets $O$ transversally.  Moreover, the map
\[\EE\to(E\times_k E)/(\pm1)\]
is an isomorphism in a neighborhood of such a point.  This proves that
$nP$ meets $O$ transversally over the values of $t$ such that there is
a point $(t,v)$ with $v^2=f(t)$ which is $n$-torsion and not
2-torsion.  There are
\[
  \begin{cases}
\frac{n^2-1}2&\text{if $n$ is odd}\\
\frac{n^2-4}2&\text{if $n$ is even}    
  \end{cases}\]
such values of $t$.  

It remains to consider what happens over the roots of $f(t)$ and
$t=\infty$.  But we checked above that $P$ meets a non-trivial point
of the identity component at $t=\infty$ and such a point is either of
infinite order or of order $p$ when $k$ has characteristic $p$.  So, for
$n$ prime to the characteristic of $k$, $nP$ does not meet $O$ over
$t=\infty$.  Similarly, over the roots of $f(t)$, $P$ passes through
the non-identity component and $2P$ passes through a non-trivial point
of the identity component, so $nP$ does not meet $O$ when $n$ is prime
to the characteristic.  We have thus identified all points where $nP$
and $O$ intersect, the intersections are transverse, and their number
is as stated in the proposition.  This completes the proof of the
proposition.
\end{proof}

\begin{rem}\label{rem:height}
  As a check, we compute the intersection number $(nP).O$ using
  heights as in Lemma~\ref{lemma:int-numbers}.  We have $O^2=P^2=-2$
  and $P.O=0$.  Since $P$ passes through a non-identity component of
  the fibers over roots of $f(t)$ and through the identity component
  at $t=\infty$, the ``correction term'' is $-C_P.(P-O)=-3$.
(See table 1.19 in \cite{CoxZucker79}.)  Using the formula
\eqref{eq:ht-def} for the height pairing yields $ht(P)=1$.

Similarly, for any odd $n$, $-C_{nP}.(nP-O)=-3$ and using that
$ht(nP)=n^2$ and calculating as in Lemma~\ref{lemma:int-numbers} we
find $(nP).O=(n^2-1)/2$.

On the other hand, for even $n$, $nP$ passes through the identity
component in all bad fibers, so $-C_{nP}.(nP-O)=0$ and we find that 
$(nP).O=(n^2-4)/2$.

This confirms that the intersections we saw above are all transverse.
\end{rem}

\begin{proof}[Proof of Theorem~\ref{thm:explicit}]
  Proposition~\ref{prop:explicit} implies the case of the Theorem
  where $\CC=\P^1$ and $L=\OO_{\P^1}(2)$, and we get infinitely many
  examples because $k$ is infinite.  Indeed, for each $j\in k$, there
  is an elliptic curve $E$ with $j$-invariant $j$, and elliptic curves
  with distinct $j$-invariants give rise to non-isomorphic
  $\EE\to\P^1$ since the non-singular fibers are twists of the chosen
  $E$.
  
  We deduce the general case by a pull-back construction.  Write
  $\EE'\to\P^1$ for one of the surfaces constructed in
  Proposition~\ref{prop:explicit}.  Let $f:\CC\to\P^1$ be a
  non-constant morphism defined by sections of the globally generated
  line bundle $F$, so $F=f^*\OO_{\P^1}(1)$ and $L=f^*\OO_{\P^1}(2)$.
  The conclusions of the theorem will hold for $\EE:=f^*\EE'\to\CC$ if
  the branch locus of $f$ is disjoint from the set of points of $\P^1$
  over which $\EE'$ has bad reduction or $nP$ meets $O$.  From the
  construction of $\EE'$, we see that the set to be avoided is
  precisely the set of $x$ coordinates of torsion points of the
  elliptic curve $y^2=f(x)$ used to construct $\EE'$.  Although this
  set is infinite, we will see that it is sparse in $k$.

  We divide into two cases according to the characteristic of $k$,
  starting with the case of characteristic zero.  Choose an elliptic
  curve $E$ over $\Q$, and an auxiliary prime $\ell$ such that
  equations defining $E$ are $\ell$-integral and $E$ has good
  reduction modulo $\ell$. Then \cite[VIII.7.1]{SilvermanAEC} implies
  that the $x$-coordinate of a torsion point $Q$ (defined over some
  number field $K$ and taken with respect to an $\ell$-integral model)
  is ``almost integral,'' i.e., it satisfies $\ell^2 x(Q)$ is integral
  at all primes of $K$ over $\ell$.  Construct $\EE'\to \P^1_\Q$ using
  $E$ as in Proposition~\ref{prop:explicit}.  Then choose any
  non-constant morphism $f:\CC\to\P^1_k$ defined by sections of $F$.
  Composing $\phi$ with a linear fractional transformation, we may
  arrange that the branch locus of $f$ consists of points with finite,
  non-zero coordinates, and that any of those coordinates which lie in
  a number field have large denominators at primes over $\ell$.  They
  are thus distinct from the $x$-coordinates of torsion points of $E$,
  and $\EE=f^*\EE'$ satisfies the requirements of the theorem.

  When $k$ has characteristic $p>2$, the argument is similar, but
  simpler: Choose an embedding $\Fp(t)\into k$, an elliptic curve $E$
  over $\Fp(t)$, and a place $v$ of $\Fp(t)$ where $E$ has good
  reduction.  Then by \cite[\S4]{Tate75}, the coordinates of any
  torsion point $Q$ of $E$ (defined over some algebraic extension $K$
  of $\Fp(t)$ and taken with respect to an integral model) are
  integral at places of $K$ over $v$.  Use $E$ to construct $\EE'$ as
  in Proposition~\ref{prop:explicit}.  Then choose any non-constant
  morphism $f:\CC\to\P^1_k$ defined by sections of $F$.  Composing
  $\phi$ with a linear fractional transformation, we may arrange that
  the branch locus of $f$ consists of points with finite, non-zero
  coordinates, and that any of those coordinates which are algebraic
  over $\Fp(t)$ are not integral at places over $v$.  They are thus
  distinct from the $x$-coordinates of torsion points of $E$, and
  $\EE=f^*\EE'$ satisfies the requirements of the theorem.
\end{proof}

\begin{rem}
  It seems likely that when $k$ is a number field or a global function
  field, the construction in Proposition~\ref{prop:explicit} gives
  rise to elliptic divisibility sequences $D_n$ whose ``new parts''
  $D'_n$ are often irreducible, i.e., prime divisors.
\end{rem}

\section{Application to geography of surfaces}\label{s:geography}
In this section, we will prove Theorem~\ref{thm:geography}.  Let
$k=\C$, $\CC=\P^1$, and $L=\O_{\P^1}(d)$ where $d=g+1$, which by
assumption satisfies $d\ge1$.  Theorem~\ref{thm:vg} guarantees the
existence of an elliptic surface $\pi:\EE\to\P^1$ of height $d$ (i.e.,
such that $O^*(\Omega^1_{\EE/\P^1})=L$) with a section $P$ such that for
all $n$, $nP$ meets $O$ transversally in $d(n^2-1)$ points.  Moreover,
$\pi$ has irreducible fibers.  Let $F$ be the class of a fiber of
$\pi$. We have $O^2=P^2=-d$, $F^2=0$, and the canonical divisor of
$\EE$ is
\[K_\EE=(d-2)F.\]  
Thus the geometric genus of $\EE$ is $d-1=g$.

Fix an integer $n>1$.  Later in the proof, we will need to assume that
$n$ is sufficiently large.  Let $h:Y\to\EE$ be the result of blowing
up all but one of the points of intersection of $O$ and $nP$, let
$E_i$ ($i=1,\dots,d(n^2-1)-1$) be the exceptional divisors, and let
$C_j$ be the strict transform the section $jP$.

Write $\tilde F$ for the strict transform of a general fiber of $\pi$ in $Y$.
We have
\[C_0^2=C_n^2=-dn^2+1,\quad C_0.C_n=1,\quad\text{and}\quad
K_Y=(d-2)\tilde F+\sum_i E_i.\]

It is a simple exercise to check that the intersection pairing on $Y$
is negative definite on the lattice spanned by $C_0$ and $C_n$, and
that $p_a(Z)\le0$ for all effective divisors supported on
$C_0\cup C_n$. Thus by Artin's contractibility theorem
\cite[Thm.~2.3]{Artin62} or \cite[Thm~3.9]{BadescuAS}, we may contract
$C_0\cup C_n$.  In other words, there is a proper birational morphism
$f:Y\to X$ where $X$ is a normal, projective surface,
$f(C_0\cup C_n)=\{x\}$, and $f$ induces an isomorphism
\[Y\setminus(C_0\cup C_n)\cong X\setminus\{x\}.\]

\begin{proof}[Proof that $X$ satisfies the conditions of
  Theorem~\ref{thm:geography}]
  We have already observed that $X$ is normal and projective.  Since
  the geometric genus is a birational invariant, and $\EE$ has
  geometric genus $g$, so does $X$.  

  It is evident that $X$ has exactly one singular point, namely $x$,
  and the minimal resolution of $x$ is the union of two smooth
  rational curves ($C_0$ and $C_n$) meeting at one point and having
  self-intersection $-a:=-dn^2+1$.  Such a singularity is analytically
  equivalent to a cyclic quotient singularity of type
  $1/(a^2-1)(1,a)$, as one sees by considering the Hirzebruch-Jung
  continued fraction
\[a-\frac1a=\frac{a^2-1}a.\]
(See \cite[Ch.~3]{BarthHulekPetersVandeVenCCS}.)  In particular, it
follows that $X$is $\Q$-Gorenstein and $K_X$ is $\Q$-Cartier.

We next compute the discrepancy of $x$ (as defined for example in
\cite{KawamataMatsudaMatsuki87}) and verify that $x$ is log-terminal.  
Since $C_j$ is smooth and rational with self-intersection
$-a$, we have $C_j.K_Y=a-2$.  Define coefficients $\alpha_0,\alpha_n\in\Q$ by
\[K_Y=f^*K_X+\alpha_0C_0+\alpha_nC_n\]
(an equality in $\Pic(Y)\tensor\Q$).  Then
\begin{align*}
  0&=(f_*C_0).K_X\\
&=C_0.f^*K_Y\\
&=(a-2)+\alpha_0a-\alpha_n
\end{align*}
and similarly,
\[0=(a-2)-\alpha_0+\alpha_na.\]
We find that 
\[\alpha_0=\alpha_n=-\frac{a-2}{a-1}>-1.\]
This confirms that $x$ is a log-terminal singularity, and we have
\[f^*K_X=(d-2)\tilde F+\sum_iE_i+\frac{a-2}{a-1}\left(C_0+C_n\right).\]

Next, write $b:=(a-2)/(a-1)$ and compute
\begin{align*}
K_X^2&=(f^*K_X)^2\\
&=\left((d-2)\tilde F+\sum_i E_i + b(C_0+C_n)\right)^2\\
&=dn^2(4b-2b^2-1)+d+1+4b^2-12b.
\end{align*}
As $n\to\infty$, $a\to\infty$ and $b\to1$, so $K_X^2$ grows like
$dn^2$ and in particular is unbounded as $n$ varies.

To finish the proof, it remains to check that $K_X$ is ample, which we
do using the Nakai-Moishezon criterion \cite[Thm.~1.22]{BadescuAS}.
We have already seen that $K_X^2>0$, so it will suffice to check that
for every irreducible curve $C$ on $X$, $C.K_X>0$.  For any such curve
$C$, 
\[f^*C=D+m_0C_0+m_nC_n\] 
where $D$ is an irreducible curve not equal to $C_0$ or $C_n$, and
$m_j\ge0$ for $j=0,n$.  It will thus suffice to prove that
$D.f^*K_X>0$ for all irreducible curves not equal to $C_0$ or $C_n$
and that $C_j.f^*K_X=0$ for $j=0,n$.  For the latter assertion, one
computes that
\[C_j.f^*K_X=f_*(C_j).K_X=0\]
for $j=0,n$.

For the former assertion, we make a case by case analysis of the
possibilities for $D$.  They are:
\begin{itemize} 
\item the strict transform $\tilde F$ of a general fiber of $\pi$, for
  which we have 
\[\tilde F.f^*K_X=2(a-2)/(a-1)>0;\]
\item one of the exceptional curves $E_i$, for which we have
\[E_i.f^*K_X=-1+2(a-2)/(a-1),\] 
which is $>0$ if $d>1$ or $n>2$;
\item the strict transform $\tilde G_i=\tilde F-E_i$ of a fiber of
  $\pi$ passing though an intersection point of $O$ and $nP$, for
  which we have $\tilde G_i.f^*K_X=1$;
\item and the strict transform $\tilde Q$ of a multisection $Q$ of
  $\pi$ not equal to $O$ or $nP$.  Let $e$ be the degree of
  $\pi_{|Q}:Q\to\P^1$, assume that $n>2$ so that $b>1/2$, and recall
  that
\[f^*K_X=(d-2)\tilde F+\sum_iE_i+b\left(C_0+C_n\right).\]
If $d>2$,  we have
\[\tilde Q.f^*K_X\ge (d-2)e>0.\]
If $\tilde Q.\sum_iE_i>e$ or $\tilde Q.C_0>2e$, then again it is clear that
$\tilde Q.f^*K_X>0$ as required.  To finish, assume that $d\le2$,
$\tilde Q.\sum_iE_i\le e$, and $\tilde Q.C_0\le 2e$.  Applying $h^*$
to the equality in Lemma~\ref{lemma:nP-in-pic} implies that 
\[C_n=nC_1+(1-n)C_0-n\sum_iE_i+d(n^2-n)\tilde F\]
in $\NS(Y)$.  If $Q\neq P$,
we find that
\[\tilde Q.C_n\ge(1-n)2e-ne+d(n^2-n)e\]
which is $>e$ for all $n\ge4$, and this shows that
$\tilde Q.f^*K_X>0$.  If $Q=P$, then
\[\tilde Q.C_n\ge  -nd-n+d(n^2-n)\]
and we find that $\tilde Q.f^*K_X>0$ for all $n\ge5$.  (When $Q=P$,we
can also calculate directly that $\tilde Q.f^*K_X=d-2+b(dn^2-2dn)$
which goes to infinity with $n$.)
This completes the check that $\tilde Q.f^*K_X>0$ for all irreducible
multisections $\tilde Q$ not equal to $C_0$ or $C_n$.
\end{itemize}
The itemized list completes the verification that $K_X$ is ample, and
this finishes the proof of the theorem. 
\end{proof}

\bibliography{database}

\end{document}